\theoremstyle{plain}
\newtheorem{thm}{Theorem}
\newtheorem{prop}{Proposition}
\newtheorem{lemma}{Lemma}
\theoremstyle{definition}
\newtheorem{rem}{Remark}
\newcommand{\be}{\begin{equation}}
\newcommand{\ee}{\end{equation}}
\newcommand{\bea}{\begin{eqnarray}}
\newcommand{\eea}{\end{eqnarray}}
\newcommand{\ve}{{\varepsilon}}
\newcommand{\Rr}{\mathbb{R}}
\newcommand{\ol}[1]{\mkern 1.5mu\overline{\mkern-1.5mu#1\mkern-1.5mu}\mkern 1.5mu}
\newcommand{\red}[1]{{#1}}
\newcommand{\supp}{\rm supp}
\title[Onsager's Conjecture on Domains with Boundary]{Onsager's conjecture and anomalous dissipation on domains with boundary}
\author{Theodore D. Drivas and Huy Q. Nguyen}
\address{Department of Mathematics, Princeton University, Princeton, NJ 08544}
\email{tdrivas@math.princeton.edu}
\address{Department of Mathematics, Princeton University, Princeton, NJ 08544}
\email{qn@math.princeton.edu}
\date{today}
\begin{document}
\begin{abstract} 
We give a localized regularity condition for energy conservation of weak solutions of the Euler equations on a  domain $\Omega\subset \mathbb{R}^d$, $d\ge 2$, with boundary. In the bulk of fluid, we assume Besov regularity of the velocity { $u\in L^3(0,T;B_{3}^{1/3, c_0})$}. On an arbitrary thin neighborhood of the boundary, we assume boundedness of velocity and pressure and, at the boundary, we assume continuity of wall-normal velocity. 
We also prove two theorems which establish that the global viscous dissipation vanishes in the inviscid limit for Leray--Hopf solutions $u^\nu$ of the Navier-Stokes equations under the similar assumptions, but holding uniformly in a thin boundary layer of width \red{$O(\nu^{\min\{1,\frac{1}{2(1-\sigma)}\}})$}  {when $u\in L^3(0, T; B_3^{\sigma, c_0})$ in the interior for any $\sigma\in [1/3,1]$}. The first theorem assumes continuity of the velocity in the boundary layer whereas the second assumes a condition on the vanishing of energy dissipation within the layer.  \red{In both cases, strong $L^3_tL^3_{x,loc}$ convergence holds to a weak solution of the Euler equations}. Finally, if a strong Euler solution exists in the background, we show that equicontinuity at the boundary within a $O(\nu)$  strip alone suffices to conclude the absence of anomalous dissipation.
\end{abstract}
\keywords{Onsager's conjecture, anomalous  dissipation, bounded domain, inviscid limit}
\subjclass[2010]{76F02, 35Q30, 35Q31, 35Q35}

 \maketitle
 
 In his celebrated 1949 paper, Lars Onsager \cite{O49} announced that spatial H\"{o}lder exponents $\leq 1/3$ are required of the velocity field to sustain non-vanishing energy dissipation in the limit of zero viscosity
  \be\label{zerothLaw3d}
  \lim_{\nu\to 0}  \nu\langle |\nabla u^\nu|^2\rangle =  \varepsilon>0
 \ee
  where  $\langle \cdot \rangle$ is some relevant averaging procedure, space, time or ensemble.
For flows on domains without boundary (such as $\mathbb{T}^d$ or $\mathbb{R}^d$), Onsager's assertion has since been proved \cite{GLE94,CET94,DR00,CCFS08,DE17} and dissipative Euler solutions with less regularity have been constructed \cite{LS09,LS10,LS12,I16,BLSV17}.  See the recent review \cite{E18}.

Although there is a wealth of corroboratory evidence  from numerical simulations for anomalous dissipation for flows on the torus \cite{BO95,TBS02,KRS98,KIYIU03},  all empirical evidence from laboratory experiments involve flows confined by solid boundaries.  The most common experiments study turbulent flows produced downstream of wire-mesh grids in wind-tunnels or 
generated by flows past other solid obstacles, such as plates or cylinders.  Such experiments indeed indicate the presence of a dissipation anomaly;  the data plotted in 
\cite{KRS84,PKW02} clearly shows that $\langle \ve^\nu(t) \rangle$ is nearly independent 
of $\nu=1/Re$ as $Re$ increases.

 The role of  solid walls for anomalous energy dissipation is complex.  Vorticity is fed into flows by viscous boundary layers  that detach from the walls, thereby generating turbulence. These boundary layers become thinner as $\nu=1/Re$ decreases, and sharp velocity gradients may emerge and propagate into the bulk.
   In this way, walls may be the source of the irregular velocity fields in the interior required, according to Onsager, to sustain anomalous dissipation. We emphasize that singularities need not be produced in finite time by Euler, although this is also a possibility, see Remark 3 of \cite{DE17}. This latter point is particularly relevant in two-dimensions, where global Euler solutions exist for regular enough initial conditions.  Walls could also play a more active role in dissipating energy. It is conceivable, for example, that anomalous dissipation has contributions not only from the bulk but also from the boundary layer itself.   This latter issue is closely connected with the formation of singularities in finite time for the Euler equation and elucidates the need for studying weak solutions.

Indeed, assuming the existence of a smooth Euler solution satisfying no-flow boundary conditions, Kato  \cite{Kato84} proved that the following two conditions are equivalent: (i) the integrated energy dissipation
vanishes in a very thin boundary layer of thickness $O(\nu)$ and
(ii) any Navier-Stokes solution with no slip boundary conditions at the wall converges strongly in $L^\infty_tL_x^2$ to the
Euler solution as $\nu\to 0$. The latter statement implies that energy dissipation must, in fact,
vanish everywhere in the domain.    However, present experiments suggest that, at least in certain  three-dimensional geometries, that energy dissipation vanishes very near the boundary but total dissipation does not.   In Taylor-Couette cells with smooth walls, Cadot et al. \cite{C97},  found that energy dissipation in the boundary layer of such flows decreases with Reynolds number but that the energy dissipation in the bulk appears to be independent of Reynolds number. 
This observation is consistent with the Prandtl--von-K\'{a}rm\'{a}n theory of the logarithmic-law-of-the-wall for pipe and channel flow which predicts \cite{P32,vK30} that the energy dissipation vanishes as $\nu\to 0$ within the viscous sublayer $O(\nu/u_*)$ where $u_*$ is the friction velocity.  Coupled with Kato's result\footnote{Standard scaling theory (see e.g. \cite{S07,P07}) implies that, if the the channel or pipe flow is forced with centerline velocity $u_c$ fixed independent of $Re_*$, then $u_*\sim u_c/{\rm log}(Re_*)$ where $Re_*= hu_*/\nu$ is the Reynolds number based on friction velocity $u_*$ and half-height $h$ of the channel/pipe. If this holds asymptotically as large $Re_*$, it follows that the viscous sublayer of width $O(\nu/u_*)$ is asymptotically larger than the layer of width $O(\nu)$ and therefore the Cadot et al. experiment indicates that energy dissipation vanishes in Kato's boundary layer.  See Chapter VI \S C of \cite{Enotes}  for further discussion.}, these observations serve as indirect evidence for finite-time singularity formation for the $3d$ Euler equations in domains with boundary.  A weaker notion of Euler solution must be considered if it is to describe the inviscid limit. 

On the other hand, in two-dimensions, simulations of the Navier-Stokes equation with critical Navier--slip boundary conditions (slip length proportional to $\nu$) of a vortex dipole crashing into a wall indicate the persistence of energy dissipation in the zero-viscosity limit \cite{Farge11}.  Additional details of their observations are provided in the discussion after our Theorem \ref{thm:inviscid}.  This is a setting in which a global smooth Euler solution exists, but nevertheless there appears to be a dissipative anomaly. 
{We remark that, unlike the situation on the torus \cite{DE17}, convergence to a dissipative weak solution and the existence of a strong Euler solution with the same data/forcing are not in contradiction provided, of course, that there is anomalous dissipation at least within a Kato--layer of the boundary.  This is possible due to the failure of the classical weak-strong uniqueness theorem on domains with solid boundaries.  Instead, weak solutions coincide with strong solutions if the latter exist and the former obeys the additional condition that it is continuous in a neighborhood of the boundary, see Theorem 5.3 of  \cite{W17}. There are examples demonstrating the sharpness of this result \cite{B14}. }

In domains without boundary, Onsager conjectured that dissipative \emph{weak} Euler solutions should describe the high-$Re$ limit of turbulent solutions. On bounded domains a similar conjecture can be made.  Indeed,  Constantin \& Vicol \cite{CV17} show that, in two dimensions, interior enstrophy bounds uniform in viscosity  are sufficient to conclude convergence weakly in $L^2(0,T;L^2(\Omega))$ to a weak Euler solution\footnote{In the paper \cite{CV17}, convergence is obtained at the level of the vorticity equation.  However, inspection of their argument shows that convergence of the velocity is implied.}.   Such solutions may be dissipative.  In three-dimensions, they show that weak convergence in $L^2(\Omega)$ for a.e. $t$ together with some uniform bounds on the second order structure function suffice to conclude convergence.   In any dimension for $\Omega=\mathbb{T}^d$,  \cite{DE17,Isett17} show that uniform boundedness of $u^\nu$ in the space $L^2(0,T;B_2^{s,\infty}(\Omega))$  for any $s>0$  implies strong convergence in $L^2([0,T]\times \Omega)$ to a weak Euler solution (at least along subsequences) in the limit of zero-viscosity.   See Lemma \ref{lemm:inviscid} in \S \ref{secEnCons} of the present note for similar conditions on arbitrary bounded domains.  
 This Besov regularity is equivalent to the physically reasonable conditions that the solutions $u^\nu$ have finite energy uniformly in $\nu$ and that there exists a uniform upper bound for the second-order structure function $S_2^\nu(r) := \langle |\delta u^\nu(r;\cdot)|^2\rangle $ of the form $S_2^\nu(r) \leq C |r|^{2s}$ that holds for all scales $|r|$.  See Remark 1 of \cite{DE17}.  This remains an open and active area of research.

 Thus, it appears that understanding the behavior of weak Euler solutions is crucial for the study of the inviscid limit in bounded domains.  Accordingly, in this note, we discuss the extension of Onsager's ideas in the presence of solid boundaries.   Let $d\geq 2$ and $\Omega\subset \mathbb{R}^d$ be a bounded domain with  $C^2$ boundary, $\partial \Omega$. The incompressible Euler equations read
\begin{align} \label{Eeq}
   \partial_tu+\nabla \cdot (u\otimes u)+\nabla p&=0,\qquad \text{in} \ \Omega\times (0,T),\\  \label{Eincomp}
  \nabla \cdot u&=0,\qquad \text{in} \  \Omega\times (0,T),\\
  u\cdot \hat{n} &= 0,\qquad \text{on} \ \partial \Omega\times (0,T),\label{Ebc}
\end{align}
where $ \hat{n}= \hat{n}(x)$ is the unit normal vector field to the boundary $\partial\Omega$.
We say that  $(u,p)$ is a weak Euler solution on $\Omega\times (0,T)$  if $u\in C_w(0,T; H(\Omega))$, $p\in L^1_{loc}(\Omega\times (0, T))$ and 
\be\label{weakEuler}
 \int_0^T\int_\Omega  \big(u \cdot  \partial_t\varphi   +  u \otimes u  :  \nabla  \varphi  +p \nabla \cdot  \varphi \big)dxdt =0
\ee
for all test vector fields $\varphi\in C_0^\infty(\Omega\times (0,T))$.
Here, $H(\Omega)$ denotes the space of $L^2$ divergence--free vector fields which are parallel to the boundary $\partial\Omega$, i.e. $H(\Omega)$ is the completion in $L^2(\Omega)$ of the space  
\[
\left\{w\in C^\infty_c(\Omega; \Rr^d): \nabla\cdot w=0\right\}.
\]
Note that for any $f\in H(\Omega)$, we have that $ f\cdot \hat{n} \in H^{-1/2}(\partial\Omega)$ so that the boundary condition \eqref{Ebc} is well defined in that space.

 We show that any weak Euler solution possessing a certain degree of smoothness in the interior and having \emph{continuous wall-normal} velocity and \emph{bounded} velocity and pressure arbitrarily close to the walls must conserve kinetic energy globally.  Regularity in the interior will be measured in Besov spaces, following the classical work of Constantin-E-Titi \cite{CET94}. 
Recall that the Besov space $B_p^{\sigma,\infty}(U)$ is made up of measurable functions $f:U\subset \Rr^d\to \mathbb{R}$ which are finite in the norm
\be
\| f\|_{B_p^{\sigma,\infty}(U)}:= \| f\|_{L^p(U)} 
+ \sup_{r\in \Rr^d}\frac{\|f(\cdot+r) - f(\cdot)\|_{L^p(U\cap (U-\{r\}))}}{|r|^\sigma}
\label{besov-def} \ee
for $p\geq 1$ and $\sigma\in (0,1)$. These spaces are very relevant from a physical point of view, as they can be described simply in terms of scalings of structure functions $S_p(r):=\langle |\delta u(r;\cdot)|^p\rangle$.  See Remark 1 of \cite{DE17} and discussion in \cite{GLE95}.  Moreover, velocity fields in the high-$Re$ limit are not expected to have anything near a third of a derivative when measured in $L^\infty$ (H\"{o}lder continuous $C^\alpha$ with $\alpha\approx1/3$).  Indeed, this regularity would correspond to Kolmogorov's celebrated 1941 mono-fractal theory of turbulence. Instead, turbulent flows are observed to be intermittent and exhibit a multi-fractal spectrum, see e.g. \cite{S96,A84} and high--$Re$ velocity fields have remarkable close to a third of a derivative when measuring in $L^3$, i.e. they are nearly $B_3^{\sigma,\infty}$ with $\sigma \lessapprox 1/3$ (as indicated by inertial range scaling of the third-order absolute structure function).  

{As shown in \cite{CCFS08}, energy conservation for the Euler equations in the whole space $\Rr^d$ holds provided that $u\in B_3^{1/3, c_0}(\Rr^d)$, a subspace of $B_3^{1/3, \infty}(\Rr^d)$ that can be defined as follow
\begin{equation}\label{def:Bc}
B_p^{\sigma, c_0}(U)=\left\{f\in L^p(U):~\lim_{|r|\to 0}\frac{\|f(\cdot+r) - f(\cdot)\|_{L^p(U\cap (U-\{r\}))}}{|r|^\sigma}=0\right\}.
\end{equation}
Let us note that $B_p^{\sigma', \infty}(U)\subset B_p^{\sigma, c_0}(U)\subset B_p^{\sigma, \infty}(U)$ for any $\sigma'>\sigma$. Moreover, \cite{CCFS08} provides an example of a solenoidal vector field in $B_3^{1/3, \infty}(\mathbb{R}^3)$ for which the energy flux does not vanish, showing the sharpness  of their kinematic argument. We refer to \cite{Shvydkoy} for the proof of local energy balance in the class $B_3^{1/3, c_0}(\mathbb{R}^d)$. For time-dependent functions we define
\be\label{def:Bct}
L^q(0, T; B_p^{\sigma, c_0}(U))=\left\{f\in L^q(0, T; L^p(U)):~\lim_{|r|\to 0}\frac{\|f(\cdot+r) - f(\cdot)\|_{L^q(0, T; L^p(U\cap (U-\{r\})))}}{|r|^\sigma}=0\right\}.
\ee
}
Denote the distance function to the boundary $d(x)$ and near-wall strip $\Omega_a$ by
\be
d(x)=\inf_{y\in \partial\Omega} |x-y| \quad\text{and}\quad \Omega_a=\{x\in \Omega: d(x)<a\}.
\ee
We now state our first theorem regarding weak solutions of the Euler equations.  

\begin{thm}\label{thm:onsager}
Let  $(u,p)$ be a weak solution of the Euler equations \eqref{Eeq}--\eqref{Ebc} on $\Omega\times (0, T)$. Assume that 
\begin{itemize}
\item for any $U\Subset \Omega$ we have
{\be\label{bndassum1}
u\in L^3(0,T;B_{3}^{1/3, c_0}(U)),
\ee
}
\item for some $\ve>0$ arbitrarily small we have
\be\label{bndassum2}
u \in L^3(0,T; L^\infty(\Omega_\ve)),\quad p \in L^{3/2}(0,T;L^\infty(\Omega_\ve))
\ee
and continuity of the wall-normal velocity at the boundary
\be\label{bndassum3}
\lim_{\delta\to 0}\sup_{x\in \Omega_{\delta}} |\hat{n}(\pi(x)) \cdot u (x,t)| = 0 \qquad \text{in} \qquad L^3((0,T))
\ee
where $\pi(x)$ denotes the unique closest point to $x$ on $\partial\Omega$ (see Eq. \eqref{distDeriv}).
\end{itemize}
 Then energy is globally conserved
\be\label{energycons}
\frac{1}{2} \int_{\Omega} |u(x,t)|^2 dx = \frac{1}{2} \int_{\Omega} |u(x, 0)|^2 dx  \qquad \forall t\in [0,T).
\ee
\end{thm}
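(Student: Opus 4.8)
The plan is to combine the Constantin--E--Titi mollification argument in the interior with a boundary cutoff, thereby reducing \eqref{energycons} to showing that a single near-wall flux term vanishes as the cutoff shrinks; this last point is where \eqref{bndassum2}--\eqref{bndassum3} enter. First I would fix $\psi\in C^\infty([0,\infty);[0,1])$ with $\psi\equiv 0$ on $[0,1]$, $\psi\equiv 1$ on $[2,\infty)$, and for small $\delta>0$ set $\chi_\delta(x):=\psi(d(x)/\delta)$. Since $\partial\Omega\in C^2$, on a fixed collar $\{d<a_0\}$ the distance $d$ is $C^2$ with $\nabla d=-\hat n\circ\pi$; hence for $2\delta<a_0$ one has $\chi_\delta\in C^2(\ol\Omega)$, $\chi_\delta\equiv 1$ outside $\Omega_{2\delta}$, $|\nabla\chi_\delta|\le C/\delta$, $\supp\nabla\chi_\delta\subset\overline{\Omega_{2\delta}}\setminus\Omega_\delta\Subset\Omega$, and, crucially,
\[
\nabla\chi_\delta(x)=-\tfrac1\delta\,\psi'\big(d(x)/\delta\big)\,\hat n(\pi(x)).
\]
Mollifying \eqref{weakEuler} in space on a neighborhood of $\supp\chi_\delta$ (legitimate for $\kappa<\delta/2$) gives $\partial_t u^\kappa+\nabla\cdot(u\otimes u)^\kappa+\nabla p^\kappa=0$ there with $\nabla\cdot u^\kappa=0$; for fixed $\delta,\kappa$ the right-hand side has $L^{3/2}_t$ coefficients, so $u^\kappa$ is $C^1$ in $t$ with values in $C^\infty(\supp\chi_\delta)$. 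Pairing with $\chi_\delta u^\kappa$, integrating over $\Omega$ (no boundary terms, since $\chi_\delta$ vanishes near $\partial\Omega$), integrating by parts and using $\nabla\cdot u^\kappa=0$ to simplify the cubic and pressure contributions in the usual way, this yields the localized energy balance
\[
\frac{d}{dt}\int_\Omega\chi_\delta\frac{|u^\kappa|^2}{2}\,dx=-\int_\Omega\chi_\delta\,\nabla u^\kappa:r^\kappa\,dx+\int_\Omega(\nabla\chi_\delta\cdot u^\kappa)\Big(\frac{|u^\kappa|^2}{2}+p^\kappa\Big)dx,
\]
where $r^\kappa:=u^\kappa\otimes u^\kappa-(u\otimes u)^\kappa$ is the Constantin--E--Titi commutator.

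Next I would send $\kappa\to 0$ with $\delta$ fixed. On the fixed interior set $\supp\chi_\delta$, hypothesis \eqref{bndassum1} together with the standard estimates $\|\nabla u^\kappa\|_{L^3_{t,x}}=o(\kappa^{-2/3})$ and $\|r^\kappa\|_{L^{3/2}_{t,x}}=o(\kappa^{2/3})$ — which are exactly where the little-Besov condition $c_0$ of \eqref{def:Bct} is used, as opposed to the endpoint $B_3^{1/3,\infty}$ — force $\int_0^T\!\int_\Omega\chi_\delta\,\nabla u^\kappa:r^\kappa\to 0$. On $\supp\nabla\chi_\delta\subset\Omega_{2\delta}\subset\Omega_\ve$ (for $2\delta<\ve$), hypothesis \eqref{bndassum2} gives $u^\kappa\to u$ in $L^3_{t,x}$ and $p^\kappa\to p$ in $L^{3/2}_{t,x}$, so the remaining term converges; and $u^\kappa(t)\to u(t)$ in $L^2(\supp\chi_\delta)$ for every $t$ since $u(t)\in L^2(\Omega)$. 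Integrating in $t$ gives, for every $t\in[0,T)$,
\[
\frac12\int_\Omega\chi_\delta|u(t)|^2\,dx-\frac12\int_\Omega\chi_\delta|u(0)|^2\,dx=R_\delta(t),\qquad R_\delta(t):=\int_0^t\!\int_\Omega(\nabla\chi_\delta\cdot u)\Big(\frac{|u|^2}{2}+p\Big)dx\,ds.
\]

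The crux — and the step I expect to be the main obstacle — is to prove $\sup_{t\in[0,T)}|R_\delta(t)|\to 0$ as $\delta\to 0$. A crude bound fails: $|\nabla\chi_\delta|\,|\Omega_{2\delta}|\sim 1$, so boundedness of $u,p$ near the wall alone gives only $|R_\delta|\lesssim 1$. The point is that $\nabla\chi_\delta$ is purely wall-normal: by the formula for $\nabla\chi_\delta$ above, the integrand of $R_\delta$ carries the factor $\hat n(\pi(x))\cdot u(x,s)$, which on $\supp\nabla\chi_\delta\subset\Omega_{2\delta}$ is bounded by $\omega(2\delta,s):=\sup_{x\in\Omega_{2\delta}}|\hat n(\pi(x))\cdot u(x,s)|$. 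Using $|\psi'|\le C$, $|\Omega_{2\delta}|\le C\delta$, \eqref{bndassum2}, and Hölder in time with exponents $(3,3/2)$, for $2\delta<\min\{\ve,a_0\}$ one obtains
\[
|R_\delta(t)|\le\frac{C}{\delta}\int_0^T\omega(2\delta,s)\,|\Omega_{2\delta}|\Big(\|u(s)\|_{L^\infty(\Omega_\ve)}^2+\|p(s)\|_{L^\infty(\Omega_\ve)}\Big)ds\le C\,\|\omega(2\delta,\cdot)\|_{L^3(0,T)}\Big(\|u\|_{L^3_tL^\infty(\Omega_\ve)}^2+\|p\|_{L^{3/2}_tL^\infty(\Omega_\ve)}\Big).
\]
By \eqref{bndassum3} the prefactor $\|\omega(2\delta,\cdot)\|_{L^3(0,T)}\to 0$, while the bracket is finite by \eqref{bndassum2}; hence $R_\delta\to 0$ uniformly in $t$. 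Finally, since $u(t)\in L^2(\Omega)$ and $0\le\chi_\delta\uparrow 1$, dominated convergence gives $\int_\Omega\chi_\delta|u(t)|^2\to\int_\Omega|u(t)|^2$ for every $t$ (and at $t=0$), so letting $\delta\to 0$ in the displayed identity yields \eqref{energycons}. The whole difficulty is concentrated in the estimate for $R_\delta$: mere boundedness of the velocity near $\partial\Omega$ does not suffice, and it is precisely the wall-normal continuity \eqref{bndassum3} that supplies the smallness needed to beat the $1/\delta$ against the $O(\delta)$-thin strip.
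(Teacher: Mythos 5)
Your argument is essentially a correct variant of the paper's proof: both localize with a cutoff built from the distance function, mollify \`a la Constantin--E--Titi, use the $c_0$-Besov hypothesis to kill the interior flux, and exploit the fact that the cutoff's gradient is purely wall-normal so that the boundary production carries the factor $\hat n(\pi(x))\cdot u$, which \eqref{bndassum3} makes small enough to beat the $1/\delta$ against the $O(\delta)$-thin strip. The genuine difference is in how you couple the scales. The paper ties the cutoff's transition width to the mollification scale ($\eta_{\ell,h}$ jumps from $0$ to $1$ across a strip of width $\ell$ inside $\Omega_h$, with $\ell<h/4$), so even after $\ell\to 0$ it must estimate, uniformly in $\ell$, the cumulant contribution $\ol{u}_\ell\cdot\tau_\ell(u,u)\cdot\hat n(\pi(x))$ to the boundary flux and compare $\hat n(\pi(x))$ with $\hat n(\pi(x+r))$ (Eqs.\ \eqref{Bbd1}--\eqref{Bbd3}). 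You instead decouple the mollification scale $\kappa$ from the cutoff scale $\delta$ and send $\kappa\to0$ first at fixed $\delta$; this collapses the boundary term to one involving only the \emph{unmollified} fields, so neither the cumulant boundary term nor the normal-vector comparison ever needs to be estimated. That is a real simplification of the boundary analysis, purchased at no cost since the hypotheses \eqref{bndassum2}--\eqref{bndassum3} are stated for $u$ and $p$ themselves.

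One algebraic slip should be flagged: your localized balance is missing a term. Pairing the mollified equation with $\chi_\delta u^\kappa$ and integrating by parts produces, in addition to the two terms you record, the boundary--cumulant contribution
\begin{equation*}
-\int_\Omega \big(\nabla\chi_\delta\otimes u^\kappa\big):r^\kappa\,dx
=-\int_\Omega u^\kappa\cdot r^\kappa\cdot\nabla\chi_\delta\,dx ,
\end{equation*}
which is exactly the analogue of the $\ol{u}_\ell\cdot\tau_\ell(u,u)$ piece of the current $J_{\ell,h}$ in the paper. In your ordering of limits this omission is harmless: on the fixed compact set ${\rm supp}\,\nabla\chi_\delta\Subset\Omega$ one has $r^\kappa\to0$ in $L^{3/2}_{t,x}$ as $\kappa\to0$ (mere $L^3_{t,x,loc}$ integrability of $u$ suffices, no rate needed), while $\nabla\chi_\delta\otimes u^\kappa$ stays bounded in $L^3_{t,x}$ there, so the term vanishes before $\delta$ is sent to zero. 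You should state the term and dispose of it this way; as written the energy balance is not an identity. With that repair, the rest of your estimates (the $o(\kappa^{-2/3})\cdot o(\kappa^{2/3})$ flux bound, the H\"older-in-time $(3,3/2)$ splitting of $R_\delta$, and the dominated-convergence passage $\chi_\delta\uparrow1$) are correct and complete the proof.
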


\begin{rem}
Theorem \ref{thm:onsager} extends the results of \cite{CET94} to the bounded domain setting and refines the recent results of Bardos-Titi \cite{BT18} where the velocity $u$ was assumed to be uniformly $C^\alpha$ on $\ol\Omega$ with $\alpha>\frac{1}{3}$. Here, our localized condition is compatible with \cite{CET94} in the bulk of fluid, and is much weaker than \cite{BT18} near the boundary.   Independently, Bardos, Titi and Wiedemann arrived at this result as an implication of a local energy equality for sufficiently regular weak Euler solutions -- see Theorem 4.1 and Remark 4.2 \cite{BTW18}. Their result is stated with interior regularity measured in H\"{o}lder spaces $C^\sigma$, $\sigma>1/3$, whereas in our Theorem we use Besov spaces {$B^{1/3, c_0}_3$} and provide a different proof.  From them, we learned of the sufficiency of continuity  at the boundary  for the normal component of the velocity alone, rather than the full field (private communication).
\end{rem}

\begin{rem}\label{contatBnd}
Condition \eqref{bndassum3} could be replaced  by the \emph{refined} continuity at the boundary of $ \hat n\cdot u$ in the region $\Omega_\ve$.  By this condition, we mean specifically that there exist a function 
\be
0 \leq \gamma(r,t)\in L^3( 0,T;L^1_{loc}(\mathbb{R}^+))
\ee
with the properties that 
\begin{enumerate}[label=(\roman*)]
\item for any $\delta>0$ there exists $\rho=\rho(\delta)$ such that 
\be
|\hat{n}(\pi(x))\cdot u(x,t)|\leq \delta \gamma(|x-y|,t) 
\ee  
for all $(x,t) \in (\Omega_{\ve}\cap\Omega_\rho) \times [0,T]$
\item and that
 \be\label{gamBnd}
\frac{1}{h}\int_{0}^{h} |\gamma(y,t)|^3dy  \leq  \Gamma(t) + o_h(1)\quad\text{in}~L^1((0,T))
\ee
for all $h$ sufficiently small.
\end{enumerate}
See the proof of Theorem \ref{thm:inviscid}, where we implement this assumption.  Additionally, from inspection of the proof of Theorem \ref{thm:onsager}, instead of the continuity at the wall \eqref{bndassum3}, we could  require  the weaker condition on the near-wall velocity
\be\label{alterCond}
\quad \lim_{h \to 0} \lim_{\ell\to 0}\left[\frac{1}{\ell}\sup_{|r|<\ell}\int_{\Omega_h\setminus\Omega_{h-\ell}} \big|\hat n(\pi(x+r))\cdot u(x+r,\cdot)\big| dx\right] = 0 \quad \text{in} \quad L^3((0,T)).
\ee
\end{rem}

Furthermore, $u\in C^\delta(\Omega_\ve)$ for any $\delta\in(0,1)$ regularity suffices to guarantee \eqref{bndassum2},\eqref{bndassum3} as we now note.
\begin{rem}\label{rema:pressure}
Theorem \ref{thm:onsager} is stated with separate assumptions on the pressure.  Of course, the pressure is  completely determined from the velocity via the Poisson equation with Neumann boundary conditions
\begin{align}\label{presspoisson}
\qquad -\Delta p &= \nabla \otimes \nabla : (u\otimes u), \qquad \text{in} \ \Omega,\\
 -\partial_{\hat{n}}p &= \nabla \hat{n}: (u\otimes u), \qquad \quad \  \text{on} \ \partial\Omega.
\end{align}
%in the case of the Euler equations and 
%\be
%-\Delta p^\nu = \nabla \otimes \nabla : (u^\nu\otimes u^\nu), \qquad  \partial_np = \nu \hat{n} \cdot \Delta u^\nu
%\ee
%in the case of Navier-Stokes, see \cite{T84}.   
Therefore, one would like to make assumptions only on the velocity field and not the pressure separately.  Due to the criticality of $L^\infty$ for elliptic regularity theory, we are not able to accomplish this only assuming continuity.
However, from the assumptions of Theorem \ref{thm:onsager} we have $u\in L^3(0, T; L^3(\Omega))$ and $u\vert_{\partial\Omega}\in L^3(0, T; L^\infty(\partial\Omega))$.  Using the method of {\it transposition} 
 it follows that $p\in L^{3/2}(0, T; L^{3/2}(\Omega))$ (see Lemma 2 in \cite{EscMon}).    Combining Proposition 1.2 of \cite{BT18} with a localization argument shows that for any $\delta\in (0, 1)$ we have
\be\label{elliptic}
u \in L^3(0,T; C^\delta(\ol{\Omega_\ve}))\cap L^3(0,T; L^3(\Omega))\quad \implies \quad p  \in L^{3/2}(0,T; C^\delta(\ol{\Omega_{\ve/2}})).
\ee
The conditions in \eqref{bndassum2},\eqref{bndassum3} are implied by the single condition $u \in L^3(0,T; C^\delta(\ol{\Omega_\ve}))$ for arbitrarily $\delta>0$.
\end{rem}

  Our next theorem applies to the inviscid limit of   Leray--Hopf solutions of the Navier-Stokes equations with no-slip boundary conditions.  Specifically, we consider $u^\nu\in C_w(0,T; L^2(\Omega))\cap L^2(0,T; H_0^1(\Omega))$
  satisfying
  \begin{align} \label{NSeq}
   \partial_tu^\nu+\nabla\cdot (u^\nu \otimes u^\nu)+\nabla p^\nu&=\nu \Delta u^\nu,\qquad \text{in} \ \Omega\times (0,T),\\
  \nabla \cdot u^\nu&=0,\qquad\qquad \text{in} \  \Omega\times (0,T),\\
  u^\nu &= 0,\qquad\qquad   \text{on} \ \partial \Omega\times (0,T),\label{NSbc}
\end{align}
  in the sense of distributions, i.e.
  \be\label{weakNS}
 \int_0^T\int_\Omega  \big(u^\nu \cdot  \partial_t\varphi   +  u^\nu \otimes u^\nu  :  \nabla  \varphi  +\nu u^\nu\cdot \Delta \varphi)dxdt =0
\ee
 for all divergence-free test vector fields $\varphi\in C_0^\infty(\Omega\times (0,T))$. Such solutions are known to exist globally in time and satisfy the  energy inequality \cite{L34,H50}.  In fact, a pressure $p^\nu$ can be reintroduced from the weak formulation \eqref{weakNS}, see Theorem 1 of \cite{Jmm14} (Theorem \ref{reintro:p} herein). It is a solution of \eqref{presspoisson} with the Neumann boundary conditions
$\partial_{\hat{n}}p^\nu = \nu \hat{n}\cdot \Delta u^\nu$ on $\partial\Omega$.

 We show that if conditions similar to \eqref{bndassum1}, \eqref{bndassum2} and \eqref{bndassum3} hold uniformly in viscosity in appropriate regions, then global viscous dissipation vanishes in the limit $\nu\to 0$ for any sequence of Leray--Hopf solutions \cite{L34,H50}.  Furthermore, the limiting velocity field is a weak solution of the Euler equations.
  
\begin{thm}\label{thm:onsagerNS}
 Let $\Omega\subset \mathbb{R}^d$ be a bounded domain with  $C^2$ boundary, $\partial \Omega$.
Consider a sequence $\{u^\nu\}_{\nu>0}$ of Leray-Hopf weak solutions of the Navier-Stokes equations \eqref{NSeq}--\eqref{NSbc} with initial data $u_0^\nu\to u_0$  strongly in $L^2(\Omega)$.    Let {$\sigma\in[1/3,1]$} and \red{$\beta=\min\{1,\frac{1}{2(1-\sigma)}\}$}. Assume  that
\begin{align}\label{bndassum1NS}
&\text{$u^\nu$ are uniformly in $\nu$ bounded in\   $L^3(0,T;B_{3}^{\sigma,\red{c_0}}(\Omega\setminus \Omega_{\nu^\beta/2}))$,}\\
 \label{bndassum2NS}
&\text{$u^\nu$ are uniformly in $\nu$ bounded in $L^{3}(0,T;L^\infty(\Omega_{\nu^\beta}))$,}\\ 
 &\text{$p^\nu$ are uniformly in $\nu$ bounded in $L^{3/2}(0,T;L^\infty(\Omega_{\nu^\beta}))$}, \label{bndassum3NS}\\ 
 \nonumber
 &\text{$u^\nu$ is equicontinuous at $\partial \Omega$ in the region $\Omega_{\nu^\beta}$, i.e.}\\
 &  
\qquad\quad  \lim_{\nu\to 0}\sup_{x\in \Omega_{\nu^\beta}} |u^\nu(x,t)| = 0 \ \text{in} \ L^3((0,T)). \label{bndassum4NS}
 \end{align}
Then the global viscous dissipation vanishes 
\be\label{noanomaly}
\lim_{\nu \to 0}\nu \int_0^T \int_\Omega  |\nabla u^\nu (x, s)|^2 dx ds = 0.
\ee
Upon extracting a subsequence, $u^\nu \to u$ strongly in $L^3(0,T; L^3_{{loc}}(\Omega))$ where $u$ is a weak Euler solution.
\end{thm}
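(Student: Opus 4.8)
The proof splits into two parts: extracting a subsequence along which $u^\nu$ converges to a weak Euler solution $u$, and the vanishing \eqref{noanomaly} of the global dissipation. The first part is essentially Lemma~\ref{lemm:inviscid} and uses only \eqref{bndassum1NS} together with the energy inequality. The energy inequality bounds $\{u^\nu\}$ uniformly in $L^\infty(0,T;L^2(\Omega))$; \eqref{bndassum1NS} bounds it uniformly in $L^3(0,T;B_3^{\sigma}(U))$ on each $U\Subset\Omega$ once $\nu$ is small enough that $U\subset\Omega\setminus\Omega_{\nu^\beta/2}$; and from \eqref{weakNS} one gets $\partial_t u^\nu$ uniformly bounded on each such $U$ in a space of negative order (the nonlinearity in $L^{3/2}_tW^{-1,3/2}$, the pressure locally in $L^{3/2}_tL^{3/2}$, and $\nu\Delta u^\nu=O(\nu^{1/2})$ in $L^2_tH^{-1}$). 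An Aubin--Lions argument, diagonalized over an exhaustion of $\Omega$, yields along a subsequence $u^\nu\to u$ strongly in $L^3(0,T;L^3_{loc}(\Omega))$; passing to the limit in \eqref{weakNS} (the viscous term drops out, the nonlinear term converges by the strong local convergence) gives \eqref{weakEuler} for solenoidal test fields, a pressure is reintroduced for the limit as in Theorem~\ref{reintro:p}, and since each $u^\nu(t)$ lies in the weakly closed set $H(\Omega)$ with $u^\nu(t)\rightharpoonup u(t)$ in $L^2(\Omega)$ we obtain $u\cdot\hat n=0$ and $u\in C_w(0,T;H(\Omega))$.

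For \eqref{noanomaly} the plan is to localize the energy balance of $u^\nu$ with a partition of unity adapted to the boundary layer. For each small $\nu$ choose smooth $\chi,\tilde\chi\ge0$ with $\chi+\tilde\chi\equiv1$, $\chi\equiv1$ on $\Omega\setminus\Omega_{\nu^\beta}$, $\tilde\chi\equiv1$ near $\partial\Omega$, and $|\nabla^k\chi|\lesssim\nu^{-k\beta}$, so that the common support $S_\nu:=\{\nabla\chi\neq0\}=\{\nabla\tilde\chi\neq0\}$ lies in a strip of width $\sim\nu^\beta$ at distance $\sim\nu^\beta$ from $\partial\Omega$; because the hypotheses are scale-invariant in $\nu$, on $S_\nu$ both the uniform Besov bound \eqref{bndassum1NS} and the uniform bounds \eqref{bndassum2NS}--\eqref{bndassum4NS} (including the smallness $\|u^\nu\|_{L^3_tL^\infty}\to0$ there) are available; note $\chi^2+\tilde\chi^2\ge\tfrac12$. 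In the bulk I run the Constantin--E--Titi scheme: mollify \eqref{NSeq} at a scale $\ell\sim\nu^\beta$ (small enough that mollification over $\mathrm{supp}\,\chi$ only samples the region where \eqref{bndassum1NS} holds), test against $\chi^2 u^\nu_\ell$, and integrate over $\Omega\times(0,T)$. The nonlinearity produces the flux commutator $\int_0^T\!\!\int_\Omega \chi^2\,\nabla u^\nu_\ell : r_\ell(u^\nu,u^\nu)$, controlled by $\ell^{3\sigma-1}\|u^\nu\|_{L^3_tB_3^\sigma}^3$ --- this is $o(1)$ when $\sigma>1/3$, and at $\sigma=1/3$ it is $o(1)$ by the little-Besov hypothesis used uniformly in $\nu$ --- plus terms carrying a factor $\nabla\chi$; the pressure and viscous terms give $-\nu\int_0^T\!\!\int_\Omega\chi^2|\nabla u^\nu_\ell|^2$ together with further $\nabla\chi$-terms. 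Near the wall I instead test \eqref{NSeq} directly against $\tilde\chi^2 u^\nu$, without mollifying: this is licit because $u^\nu\in L^\infty$ on $\mathrm{supp}\,\tilde\chi$ (so the nonlinear flux is absolutely integrable) and $u^\nu=0$ on $\partial\Omega$ (so the wall contribution vanishes), leaving $-\nu\int_0^T\!\!\int_\Omega\tilde\chi^2|\nabla u^\nu|^2$ plus $\nabla\tilde\chi$-terms; an inequality suffices here, since the dissipation enters with the right sign.

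Every $\nabla\chi$-term is supported on $S_\nu$, where $|\nabla^k\chi|\cdot|S_\nu|\sim\nu^{(1-k)\beta}$: the nonlinear and pressure cut-off fluxes are $O(\|u^\nu\|_{L^\infty}^3)$ and $O(\|u^\nu\|_{L^\infty}\|p^\nu\|_{L^\infty})$, hence vanish in $L^1((0,T))$ by \eqref{bndassum3NS}--\eqref{bndassum4NS}; the viscous cut-off fluxes are $O(\nu^{1-\beta}\|u^\nu\|_{L^\infty}^2)$ and $O(\nu^{(1-\beta)/2}\|u^\nu\|_{L^\infty})$, where the extra powers of $\nabla u^\nu$ are absorbed using the \emph{only} a priori viscous bound $\nu\|\nabla u^\nu\|_{L^2_{t,x}}^2\le C$, so they vanish precisely because $\beta\le1$. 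Likewise the boundary-layer energies at $t=0$ and $t=T$ vanish ($u_0^\nu\to u_0$ in $L^2$; $|\Omega_{\nu^\beta}|\to0$; $\|u^\nu(t)\|_{L^\infty(\Omega_{\nu^\beta})}<\infty$ for a.e.\ $t$), so the near-wall balance collapses to $\nu\int_0^T\!\!\int_\Omega\tilde\chi^2|\nabla u^\nu|^2\to0$. The bulk balance, after the same reductions (and using $u^\nu_\ell(0)\to u_0$ and $u^\nu_\ell(t)-u^\nu(t)\to0$ in $L^2(\mathrm{supp}\,\chi)$ by \eqref{bndassum1NS}), becomes
\[
\nu\int_0^T\!\!\int_\Omega\chi^2|\nabla u^\nu_\ell|^2 \;=\; \tfrac12\|u_0\|_{L^2(\Omega)}^2 - \tfrac12\|u^\nu(T)\|_{L^2(\Omega)}^2 + o(1).
\]
On the other hand, estimating $\nabla u^\nu_\ell$ directly from \eqref{bndassum1NS} gives $\nu\int_0^T\!\!\int_\Omega\chi^2|\nabla u^\nu_\ell|^2\lesssim \nu\,\ell^{2(\sigma-1)}\|u^\nu\|_{L^2_tB_3^\sigma}^2\sim\nu^{\,1-2\beta(1-\sigma)}$, which $\to0$ once $\beta\le\tfrac1{2(1-\sigma)}$ (the equality case being accommodated by taking $\ell$ a power of $\nu$ slightly exceeding $\nu^\beta$ and splitting the mollifier, the overshoot into the $L^\infty$ region being handled separately). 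Combining the last two displays forces $\|u^\nu(T)\|_{L^2(\Omega)}\to\|u_0\|_{L^2(\Omega)}$, and then the energy inequality $\nu\int_0^T\!\!\int_\Omega|\nabla u^\nu|^2\le\tfrac12\|u_0^\nu\|^2-\tfrac12\|u^\nu(T)\|^2$ yields \eqref{noanomaly}; the strong $L^3_tL^3_{loc}$ convergence and the weak-Euler property were already obtained. (A majorized continuity condition of the type in Remark~\ref{contatBnd} could be substituted for \eqref{bndassum4NS}, carrying a dominating $\gamma$ through the estimates as described there.)

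The main obstacle --- the place where this goes strictly beyond the Euler argument behind Theorem~\ref{thm:onsager} --- is the viscous bookkeeping. One cannot simply feed the limit $u$ into Theorem~\ref{thm:onsager}, because the near-wall hypotheses live in the \emph{shrinking} layer $\Omega_{\nu^\beta}$ and therefore transmit no usable control of $u$ up to $\partial\Omega$; the dissipation must be destroyed directly on the family $\{u^\nu\}$. This forces the reconciliation of the \emph{mollified} bulk dissipation (all that the local balance detects) with the true dissipation, via the two-sided estimate above, and it forces one to propagate \emph{every} viscous correction --- the near-wall dissipation, the viscous cut-off fluxes, and the viscous boundary traces generated when the mollifier meets the zero-extension of $u^\nu$ --- uniformly in $\nu$ against $\nu\|\nabla u^\nu\|_{L^2_{t,x}}^2\le C$. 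It is exactly this accounting that fixes the admissible layer width at $O(\nu^{\min\{1,\,1/(2(1-\sigma))\}})$. A secondary nuisance is the $\sigma=1/3$ endpoint, where the commutator does not decay in $\ell$ and one must invoke the $B_3^{1/3,c_0}$ condition in a form uniform in $\nu$.
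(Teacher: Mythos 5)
Your overall strategy --- localize away from the wall, mollify at scale $\ell\sim\nu^{\beta}$, run the Constantin--E--Titi commutator estimate in the bulk, kill the cut-off fluxes on the transition strip with the $L^\infty$ bounds and the equicontinuity \eqref{bndassum4NS}, bound the resolved dissipation by $\nu\ell^{2(\sigma-1)}$, deduce $\|u^\nu(T)\|_{L^2}\to\|u_0\|_{L^2}$, and finish with the global Leray energy inequality --- is exactly the paper's, down to the exponent bookkeeping and the Aubin--Lions step for convergence to a weak Euler solution. However, two steps as written are not justified. The more serious one is the near-wall balance: you test the equation directly against $\tilde\chi^2 u^\nu$ ``without mollifying'' and extract $\nu\int\tilde\chi^2|\nabla u^\nu|^2\to 0$. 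For a general Leray--Hopf solution this is not licit: using the solution itself (even cut off) as a test function is precisely the local energy equality/inequality, which is not known to hold for arbitrary Leray--Hopf solutions (it is the defining property of \emph{suitable} weak solutions); local boundedness of $u^\nu$ in $L^3_tL^\infty_x$ does not rescue this, since the obstruction is the lack of time regularity needed to pair $\partial_t u^\nu$ with $u^\nu$, not integrability of the flux. Fortunately this step is also unnecessary: your own endgame derives \eqref{noanomaly} from $\|u^\nu(T)\|\to\|u_0\|$ together with the \emph{global} energy inequality, which already controls the total (hence also the near-wall) dissipation. The paper never writes a near-wall balance; it only uses the bulk (mollified, localized) balance plus the global energy inequality. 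You should simply delete the $\tilde\chi$ part. Note also that to test against the non-solenoidal field $\chi^2 u^\nu_\ell$ you must first reintroduce the pressure as an integrable function (Theorem \ref{reintro:p}); you use the pressure in the Aubin--Lions step but do not flag that the localized testing itself requires it.

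The second gap is the reconciliation at the final time: you assert $\int\chi^2|u^\nu_\ell(T)|^2=\|u^\nu(T)\|_{L^2(\Omega)}^2+o(1)$, citing $u^\nu_\ell(T)-u^\nu(T)\to0$ in $L^2(\mathrm{supp}\,\chi)$ ``by \eqref{bndassum1NS}'' and the vanishing of the boundary-layer energy at $t=T$ via $\|u^\nu(T)\|_{L^\infty(\Omega_{\nu^\beta})}<\infty$. Both hypotheses \eqref{bndassum1NS} and \eqref{bndassum2NS} are only $L^3$ in time, so they give no uniform-in-$\nu$ control at the single time slice $t=T$; the difference $\|u^\nu_\ell(T)-u^\nu(T)\|_{L^2}$ and the layer energy $\int_{\Omega_{\nu^\beta}}|u^\nu(T)|^2$ could a priori degenerate along the sequence. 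The fix --- and this is what the paper does --- is to observe that only the one-sided inequality is needed: by Jensen, $\tau_\ell(u^\nu,u^\nu)\ge0$ pointwise and $\chi^2\le1$, so $\|u^\nu(T)\|_{L^2(\Omega)}^2\ge\int\chi^2\,\ol{(|u^\nu(T)|^2)}_\ell\ge\int\chi^2|\ol{(u^\nu)}_\ell(T)|^2$ with no time regularity required; combined with $\int\chi^2|\ol{(u^\nu)}_\ell(0)|^2\to\|u_0\|^2$ (which does require an argument for the initial data, cf.\ the paper's claim \eqref{claim:conv}, since $u_0^\nu\to u_0$ only in $L^2$) and the vanishing of the flux and resolved-dissipation terms, this yields $\|u^\nu(T)\|^2\ge\|u_0\|^2-o(1)$, which together with the energy inequality closes the argument. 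With these two repairs your proof coincides with the paper's.
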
 

\begin{rem}
Assumption \eqref{bndassum4NS} could be replaced by  \emph{refined equicontinuity at the boundary} of  $ u^\nu$ in the region $\Omega_{\nu^\beta}$, see Remark \ref{contatBnd}.   Moreover, as for Euler, we could also replace  \eqref{bndassum4NS} with  
\be\label{alterCond2}
\quad \lim_{\nu \to 0} \left[\frac{1}{\ell}\sup_{|r|<\ell}\int_{\Omega_h\setminus\Omega_{h-\ell}} \big| u^\nu(x+r,\cdot)\big| dx\right] \Bigg|_{h,\ell \sim \nu^\beta} = 0 \quad \text{in} \quad L^3((0,T)).
\ee
\end{rem}
\begin{rem}
In  \cite{BTW18}, Bardos, Titi and Wiedemann  prove  a similar result -- Theorem 5.1 therein. The main difference is that, in their work, equicontinuity of the wall-normal velocity and boundedness of the solution fields are assumed within a fixed strip  (independent of $\nu$), whereas our approach allows us to make the assumption in a boundary layer that shrinks with viscosity. Equicontinuity of the full (rather than wall-normal) velocity field at the boundary \eqref{bndassum4NS} is required in our theorem precisely because we make the assumptions in a vanishingly thin boundary layer. Our proof can be easily modified to assume only continuity of the wall-normal velocity alone (as well as boundedness) in a fixed region around the boundary by taking the limit $\nu\to 0$ first in the proof of Theorem \ref{thm:onsagerNS}, and $\ell,h\to 0$ subsequently.  
\end{rem}

We now provide a variant of the previous theorem which assumes vanishing of the near-wall dissipation 
\begin{thm}\label{thm:onsagerNS'}
 Let $\Omega\subset \mathbb{R}^d$ be a bounded domain with  $C^2$ boundary, $\partial \Omega$.
Consider a sequence $\{u^\nu\}_{\nu>0}$ of Leray-Hopf weak solutions of the Navier-Stokes equations \eqref{NSeq}--\eqref{NSbc} with initial data $u_0^\nu\to u_0$  strongly in $L^2(\Omega)$.     Let \red{$\sigma\in[1/3,1]$} and \red{$\beta=\min\{1,\frac{1}{2(1-\sigma)}\}$}. Assume  that
 \begin{align}\label{bndassum1NS'}
&\text{$u^\nu$ are uniformly in $\nu$ bounded in\   $L^3(0,T;B_{3}^{\sigma,\red{c_0}}(\Omega\setminus \Omega_{\nu^\beta/2}))$,}\\
 \label{bndassum2NS'}
&\text{$u^\nu$ are uniformly in $\nu$ bounded in $L^\infty(0,T;L^\infty(\Omega_{\nu^\beta}))$,}\\ 
 &\text{$p^\nu$ are uniformly in $\nu$ bounded in $L^2(0,T;L^\infty(\Omega_{\nu^\beta}))$}, \label{bndassum3NS'}\\ 
 &\lim_{\nu\to 0} \nu \int_0^T\int_{\Omega_{\nu^\beta}} |\nabla u^\nu(x,s)|^2dx ds =o_\nu(\nu^{1-\beta}),\label{cd:vanish}
\end{align}
where the notation $f(\nu) = o_\nu(g(\nu))$ denotes $f(\nu)/g(\nu)\to 0$ as $\nu\to 0$.  Then the dissipation vanishes 
\[
\lim_{\nu \to 0}\nu \int_0^T \int_\Omega  |\nabla u^\nu (x, s)|^2 dx ds = 0.
\]
Upon extracting a subsequence, $u^\nu \to u$ strongly in $L^3(0,T; L^3_{{loc}}(\Omega))$ where $u$ is a weak Euler solution.
\end{thm}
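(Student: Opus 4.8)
The plan is to run a Constantin--E--Titi type commutator estimate for the Navier--Stokes solutions $u^\nu$, localised by a cutoff slaved to the boundary-layer width $\nu^\beta$, and to reduce the vanishing of the global dissipation to energy conservation of the limiting Euler solution; the precise exponent $\beta=\min\{1,\tfrac1{2(1-\sigma)}\}$ and the rate $o_\nu(\nu^{1-\beta})$ in \eqref{cd:vanish} will only be used at the very end. For the compactness, the Leray--Hopf inequality gives $\|u^\nu\|_{L^\infty(0,T;L^2(\Omega))}+\nu^{1/2}\|\nabla u^\nu\|_{L^2((0,T)\times\Omega)}\le C$; together with \eqref{bndassum1NS'} and the interior elliptic estimate for $p^\nu$ associated to \eqref{presspoisson}, this yields uniform-in-$\nu$ bounds for $u^\nu$ in $L^3(0,T;B_3^{\sigma,c_0}(K))$ and $p^\nu$ in $L^{3/2}(0,T;L^{3/2}(K))$ for every $K\Subset\Omega$. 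Reading $\partial_t u^\nu$ off \eqref{NSeq} and applying Aubin--Lions together with this spatial compactness, one extracts a subsequence along which $u^\nu\to u$ in $L^3(0,T;L^3_{loc}(\Omega))\cap C_w(0,T;L^2(\Omega))$ and $p^\nu\to p$ in $L^{3/2}(0,T;L^{3/2}_{loc})$. Since $\nu\int u^\nu\cdot\Delta\varphi\to0$ for any test field, passing to the limit in \eqref{weakNS} shows $u$ is a weak Euler solution; this already gives the second assertion, so it remains to prove \eqref{noanomaly}.

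\emph{Localised energy balance and the near-wall terms.} Fix $\varphi^\nu\in C^\infty(\overline\Omega)$ with $0\le\varphi^\nu\le1$, $\varphi^\nu\equiv1$ on $\Omega\setminus\Omega_{\nu^\beta}$, $\varphi^\nu\equiv0$ on $\Omega_{3\nu^\beta/4}$, so that $\mathrm{supp}\,\varphi^\nu\subset\Omega\setminus\Omega_{\nu^\beta/2}$, $\mathrm{supp}\,\nabla\varphi^\nu\subset\Omega_{\nu^\beta}$, and $|\nabla\varphi^\nu|\lesssim\nu^{-\beta}$. Mollifying \eqref{NSeq} at scale $\ell$, pairing with $\varphi^\nu u^\nu_\ell$, integrating, and letting $\ell\to0$ — the commutator defect vanishing by \eqref{bndassum1NS'} (at rate $\ell^{3\sigma-1}$ when $\sigma>1/3$, through the $c_0$ condition when $\sigma=1/3$, since $\mathrm{supp}\,\varphi^\nu$ lies in the interior region) — produces, for a.e.\ $t$,
\be\label{plan:cut}
\int_\Omega\varphi^\nu\tfrac{|u^\nu(t)|^2}{2}+\nu\int_0^t\!\!\int_\Omega\varphi^\nu|\nabla u^\nu|^2=\int_\Omega\varphi^\nu\tfrac{|u_0^\nu|^2}{2}+\int_0^t\!\!\int_\Omega\Big(\tfrac{|u^\nu|^2}{2}+p^\nu\Big)u^\nu\cdot\nabla\varphi^\nu-\tfrac\nu2\int_0^t\!\!\int_\Omega\nabla\varphi^\nu\cdot\nabla|u^\nu|^2.
\ee
On $\mathrm{supp}\,\nabla\varphi^\nu\subset\Omega_{\nu^\beta}$ the no-slip condition gives the pointwise bound $|u^\nu(x)|\le\int_0^{d(x)}|\nabla u^\nu(\pi(x)+s\hat n)|\,ds$; combining this with \eqref{bndassum2NS'}--\eqref{bndassum3NS'}, with $|\Omega_{\nu^\beta}|\lesssim\nu^\beta$, and with the rate in \eqref{cd:vanish} (which gives $\int_0^T\!\!\int_{\Omega_{\nu^\beta}}|\nabla u^\nu|^2=o_\nu(\nu^{-\beta})$), one checks by Cauchy--Schwarz in the normal variable and tubular coordinates that the flux term in \eqref{plan:cut} is $o_\nu(1)$ and the viscous cross-term is $o_\nu(\nu^{1-\beta})=o_\nu(1)$. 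Since $1-\varphi^\nu$ is supported in $\Omega_{\nu^\beta}$, where $|u^\nu|\le M$, one may drop $\varphi^\nu$ from the kinetic-energy and dissipation terms up to $o_\nu(1)$; with $u_0^\nu\to u_0$ in $L^2$ this yields the \emph{almost-balance}
\be\label{plan:bal}
\tfrac12\|u^\nu(t)\|_{L^2(\Omega)}^2+\nu\int_0^t\!\!\int_\Omega|\nabla u^\nu|^2=\tfrac12\|u_0\|_{L^2(\Omega)}^2+o_\nu(1)\qquad\text{for a.e.\ }t.
\ee

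\emph{Strong convergence and reduction.} The uniform bound $|u^\nu|\le M$ on $\Omega_{\nu^\beta}$ together with the uniform control of $\|u^\nu\|_{L^3(0,T;L^3(\Omega\setminus\Omega_{\nu^\beta/2}))}$ (which gives $\|u^\nu\|_{L^2((0,T)\times\Omega_\delta)}^2\le o_\nu(1)+C\delta^{1/3}$) combines with the interior strong convergence to yield $u^\nu\to u$ strongly in $L^2((0,T)\times\Omega)$, hence $\|u^\nu(t)\|_{L^2(\Omega)}^2\to\|u(t)\|_{L^2(\Omega)}^2$ for a.e.\ $t$ along a further subsequence. Thus \eqref{plan:bal} gives $\nu\int_0^t\!\!\int_\Omega|\nabla u^\nu|^2\to E^\ast(t):=\tfrac12\|u_0\|^2-\tfrac12\|u(t)\|^2\ge0$, so the theorem reduces to showing that $u$ conserves energy, i.e.\ $E^\ast\equiv0$; monotonicity of $\nu\int_0^t\!\!\int_\Omega|\nabla u^\nu|^2$ in $t$ and the Leray--Hopf inequality then give \eqref{noanomaly} up to $t=T$, and since all of this holds along every subsequence, the full limit and the strong $L^3(0,T;L^3_{loc})$ convergence follow.

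\emph{Energy conservation of the limit --- the main obstacle.} Since $u\in L^3(0,T;B_3^{1/3,c_0}(K))$ for every $K\Subset\Omega$, the interior local energy balance holds with no defect (\cite{Shvydkoy}): $\partial_t\tfrac{|u|^2}{2}+\nabla\cdot((\tfrac{|u|^2}{2}+p)u)=0$ in $\mathcal D'(\Omega\times(0,T))$. Testing against a cutoff $\varphi_b\equiv1$ on $\{d>b\}$, $\varphi_b\equiv0$ on $\{d<b/2\}$ with $\mathrm{supp}\,\varphi_b\Subset\Omega$, and letting $b\to0$, gives $\tfrac12\|u(t)\|^2-\tfrac12\|u_0\|^2=\lim_{b\to0}A_b(t)$ with $A_b(t):=\int_0^t\!\!\int_\Omega(\tfrac{|u|^2}{2}+p)u\cdot\nabla\varphi_b$. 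On the other hand, repeating the construction above in the annular shell between $\{d\sim\nu^\beta\}$ and $\{d=b\}$ (cutoff $\varphi^\nu-\varphi_b$: the inner flux and viscous cross-term at $\{d\sim\nu^\beta\}$ are $o_\nu(1)$ exactly as before, the viscous cross-term near $\{d=b\}$ is $o_\nu(1)$ because $\nu\int_0^t\!\!\int_{\{d>b/2\}}|\nabla u^\nu|^2\to0$ by the interior local balance, and the energy and outer-flux terms pass to the limit by the strong $L^2$ and interior strong convergence) gives
\be
A_b(t)=\int_\Omega(1-\varphi_b)\tfrac{|u_0|^2}{2}-\int_\Omega(1-\varphi_b)\tfrac{|u(t)|^2}{2}-\mathcal F_b(t),\qquad\mathcal F_b(t):=\lim_{\nu\to0}\nu\int_0^t\!\!\int_{\{\nu^\beta<d<b\}}|\nabla u^\nu|^2.
\ee
If $\mathcal F_b(t)\to0$ as $b\to0$, then $\lim_{b\to0}A_b(t)=\tfrac12\|u_0\|^2-\tfrac12\|u(t)\|^2$, which together with the first identity forces $E^\ast(t)=-E^\ast(t)$, hence $E^\ast(t)=0$. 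Thus the whole proof reduces to the estimate $\mathcal F_b(t)\to0$, i.e.\ to controlling the viscous dissipation in the shrinking shell $\{\nu^\beta<d<b\}$ --- a region where only the \emph{interior} Besov bound \eqref{bndassum1NS'} is available, since the uniform $L^\infty$ control of \eqref{bndassum2NS'} lives only in $\Omega_{\nu^\beta}$. This is precisely where $\beta=\min\{1,\tfrac1{2(1-\sigma)}\}$ and the rate $o_\nu(\nu^{1-\beta})$ in \eqref{cd:vanish} are forced: they are the break-even point between the Besov scaling $|\nabla u^\nu|\sim d^{\sigma-1}$ near the wall and the viscous weight $\nu$. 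I expect the no-slip condition, a further use of the localised balance on thin sub-shells, and \eqref{cd:vanish} to combine --- as in the second paragraph --- to deliver this bound. (Morally this last step is Theorem \ref{thm:onsager} applied to $u$, but the limit need not a priori satisfy its near-boundary hypotheses, so one must argue through the uniform-in-$\nu$ structure instead.)
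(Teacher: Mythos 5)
Your treatment of the near-wall terms is essentially the paper's: the Poincar\'e inequality $\|u^\nu\|_{L^2(\Omega_{\nu^\beta})}\lesssim \nu^\beta\|\nabla u^\nu\|_{L^2(\Omega_{\nu^\beta})}$ from no-slip, combined with \eqref{bndassum2NS'}--\eqref{bndassum3NS'} and the rate in \eqref{cd:vanish}, does kill the flux and viscous cross-terms across the cutoff at $d\sim\nu^\beta$, and your identification of the limit as a weak Euler solution via Aubin--Lions matches Lemma \ref{lemm:inviscid}. The problem is the end of your argument. By sending $\ell\to 0$ first (for fixed $\nu$) you retain the \emph{true} dissipation $\nu\int\varphi^\nu|\nabla u^\nu|^2$ in the balance \eqref{plan:cut}, and your almost-balance \eqref{plan:bal} then only tells you that $\nu\int_0^t\int_\Omega|\nabla u^\nu|^2\to E^\ast(t)=\tfrac12\|u_0\|^2-\tfrac12\|u(t)\|^2$. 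You correctly recognize that you must still prove $E^\ast\equiv 0$, i.e.\ that no dissipation concentrates in the mesoscale shell $\{\nu^\beta<d<b\}$, where only the interior Besov bound is available; but your two identities merely reproduce the tautology $E^\ast(t)=\lim_{b\to 0}\mathcal F_b(t)$, and the crucial estimate $\mathcal F_b(t)\to 0$ is left as ``I expect\dots''. None of the hypotheses gives direct control of $\nu\|\nabla u^\nu\|^2_{L^2}$ in that shell (the $L^3_tB_3^{\sigma,c_0}$ bound controls third-order increments, not the $H^1$ norm), so this is a genuine, unfilled gap at the heart of the proof.

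The paper avoids this obstacle entirely by never decoupling $\ell$ from $\nu$: it keeps $\ell\sim h\sim\nu^\beta$ and works with the \emph{resolved} energy balance, in which the dissipation term is $\nu\theta_{\ell,h}|\nabla\ol{(u^\nu)}_\ell|^2$. That term is bounded via \eqref{derivateEst} by $\nu\ell^{2(\sigma-1)}$ times the square of the Besov seminorm, i.e.\ by $\nu^{1+2\beta(\sigma-1)}$, and the choice $\beta=\min\{1,\tfrac{1}{2(1-\sigma)}\}$ makes this exponent nonnegative so the $c_0$ condition forces it to vanish --- no control of the true $H^1$ norm outside $\Omega_{\nu^\beta}$ is ever needed. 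The conclusion then follows from the sandwich
$\int_\Omega|u^\nu(t)|^2\ge\int_\Omega\theta_{\ell,h}\,\ol{(|u^\nu(t)|^2)}_\ell\ge\int_\Omega\theta_{\ell,h}|\ol{(u^\nu)}_\ell(t)|^2$
(the second step being Jensen, $\tau_\ell(f,f)\ge 0$), the asymptotic conservation of the resolved energy, and the Leray--Hopf energy inequality; in particular one never needs strong $L^2(\Omega)$ convergence of $u^\nu(t)$ up to the boundary nor energy conservation of the limit. If you want to complete your write-up, the cleanest fix is to replace your third and fourth paragraphs with this Jensen/monotonicity argument applied to the resolved quantities at scale $\ell\sim\nu^\beta$.
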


\begin{rem}
In Theorems \ref{thm:onsagerNS} and \ref{thm:onsagerNS'}, for all $\sigma\in[1/2,1]$, the layer  $\Omega_{\nu^\beta}$ becomes the Kato width $O(\nu)$. 
\end{rem}

Given that some laboratory experiments and numerical simulations observe anomalous dissipation,  it is natural to ask which of the assumptions \eqref{bndassum1NS}--\eqref{bndassum4NS} of Theorems \ref{thm:onsagerNS} or \eqref{bndassum1NS'}--\eqref{cd:vanish}  of Theorem \ref{thm:onsagerNS'} being violated is responsible for the observed anomalies.  
On the torus, the failure of uniform Besov regularity is required for dissipation to persist in the limit $\nu\to 0$ \cite{CET94,DE17}.  In domains with boundaries, observations of inertial range structure function scalings in the bulk  \cite{LS99} show similarities with the torus and indicate that such uniform regularity may fail for arbitrarily small viscosity.  On the other hand, the violation of  \eqref{bndassum2NS}--\eqref{bndassum4NS} or \eqref{bndassum2NS'}--\eqref{cd:vanish} provide new mechanisms for anomalous dissipation on domains with solid walls.   Indeed, it is possible that emergent discontinuities in a near-wall boundary layer  \eqref{bndassum4NS}, or failure for the near-wall dissipation to vanish as $\nu\to 0$  \eqref{cd:vanish} is more relevant for observations of anomalous dissipation in the simulation of a vortex dipole crash \cite{Farge11}\footnote{Recall that the simulations of  \cite{Farge11} are for Navier-Stokes with degenerate slip boundary conditions (slip length $\propto \nu$) rather than the no-slip conditions assumed for our Theorems \ref{thm:onsagerNS} and \ref{thm:onsagerNS'}.}. On the other hand, we recall that the experiments of Cadot et al. \cite{C97}  provide evidence of global dissipation anomaly although energy dissipation in the boundary layer of $O(\nu)$ does decrease as Reynolds number increases.  Our assumption \eqref{cd:vanish} implies the latter, namely it implies that
\be\label{katoDiss}
\lim_{\nu\to 0} \nu \int_0^T\int_{\Omega_{\nu}} |\nabla u^\nu(x,s)|^2dx ds =o_\nu(1).
\ee
 Ideally, we would like to replace the condition \eqref{cd:vanish} in  Theorem \ref{thm:onsagerNS'} by the weaker assumption \eqref{katoDiss}.  If \eqref{katoDiss} holds for a given flow (for example, as in the experiment \cite{C97}), such a Theorem would imply that a global dissipation anomaly is only possible if instead one of the boundedness assumptions \eqref{bndassum1NS'}--\eqref{bndassum3NS'} is violated.    
 \vspace{2mm}
 
 The condition \eqref{katoDiss} is the object of Kato's celebrated theorem \cite{Kato84} and is equivalent to convergence (in the energy norm) to the strong Euler solution if it exists.  It is worthwhile to compare the result of Theorems \ref{thm:onsagerNS} and \ref{thm:onsagerNS'}  to those of Kato \cite{Kato84} in more detail.   Our theorems provide sufficient conditions for the integrated energy dissipation to vanish which, unlike Kato's result, do not rely on the existence of a smooth Euler solution.   Indeed, as discussed above, the existing experimental evidence suggests no such strong solution exists in three-dimensions, at least in certain geometries.  However, if a strong Euler solution does exists, then any sequence of Navier-Stokes solutions satisfying the our conditions of Theorem  \ref{thm:onsagerNS} or  \ref{thm:onsagerNS'} must, by Kato  \cite{Kato84}, converge strongly in $L^\infty_tL_x^2$ to that smooth solution.  Note that  condition \eqref{cd:vanish} of Theorem  \ref{thm:onsagerNS'}  for any \red{$\sigma\in [1/3,1]$} already implies vanishing of the dissipation within the Kato layer \eqref{katoDiss}. On the other hand, in the case of two-dimensions -- where global strong Euler solutions are guaranteed to exist -- it is known on the half-space that some uniform equicontinuity at the boundary alone is enough to conclude convergence  \cite{CEIV17}.   This suggests that the sufficient conditions of  \eqref{bndassum1NS}--\eqref{bndassum4NS} of Theorem \ref{thm:onsagerNS} are also much stronger than necessary when a smooth Euler solution exists in the background. In our final theorem, we confirm this expectation by providing a generalization of the result of \cite{CEIV17} for arbitrary domains with smooth boundary in any dimension.

  \begin{thm}\label{thm:inviscid}
 Let $\Omega\subset \mathbb{R}^d$ be a bounded domain with  $C^3$ boundary, $\partial \Omega$.  Suppose there exists a  unique strong solution  $v\in C^1([0,T]\times\ol{\Omega})$ of the Euler equations on $[0,T]\times \Omega$ with $u_0\in L^2(\Omega)$.  
Consider a sequence $\{u^\nu\}_{\nu>0}$ of Leray-Hopf weak solutions of the  Navier-Stokes equations \eqref{NSeq}--\eqref{NSbc} with initial data $u_0^\nu\to u_0$ strongly in $L^2(\Omega)$.    Assume that
\be\label{inviscidLimAssum}
\begin{aligned}
&u^\nu  \ \text{ is equicontinuous at} \ \ \partial\Omega \ \text{in the region } \ \Omega_\nu \ \text{ measured in}\  \ L^3((0,T)).
 \end{aligned}
\ee  
Then the strong inviscid limit holds in the energy norm, i.e. $u^\nu \to v$ strongly in $L^\infty(0,T; L^2(\Omega))$.
\end{thm}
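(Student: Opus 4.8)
The plan is to adapt Kato's original argument \cite{Kato84}, but using the equicontinuity hypothesis \eqref{inviscidLimAssum} to construct the corrector rather than relying on the structure of the boundary layer. First I would test the Navier--Stokes equations against the strong Euler solution $v$ and perform the standard energy balance. Writing $w^\nu = u^\nu - v$, one obtains an identity of the schematic form
\be
\frac12\|w^\nu(t)\|_{L^2}^2 + \nu\int_0^t\|\nabla u^\nu\|_{L^2}^2\,ds = \frac12\|w^\nu(0)\|_{L^2}^2 + \int_0^t\!\!\int_\Omega \big(\nabla v : w^\nu\otimes w^\nu\big)\,dx\,ds + \nu\int_0^t\!\!\int_\Omega \nabla u^\nu : \nabla v\,dx\,ds + (\text{boundary terms}),
\ee
where the boundary terms arise because $v$ does not satisfy the no-slip condition. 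The first integral on the right is controlled by Gr\"onwall since $v\in C^1$; the initial data term vanishes as $\nu\to0$; so everything hinges on showing the viscous cross term and the boundary terms are $o_\nu(1)$.

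The key device, following Kato, is to replace $v$ by $v - \phi^\nu$, where $\phi^\nu$ is a divergence-free corrector supported in $\Omega_{c\nu}$ that matches the tangential trace of $v$ on $\partial\Omega$, so that $u^\nu - (v-\phi^\nu)$ vanishes on $\partial\Omega$ and can legitimately be used as a test function. Since $\partial\Omega\in C^3$, near the boundary one has smooth coordinates $(\pi(x), d(x))$, and one can build $\phi^\nu$ using a boundary-layer cutoff $\chi(d(x)/\nu)$ together with a divergence-correcting term; the standard estimates give $\|\phi^\nu\|_{L^\infty}\le C$, $\|\phi^\nu\|_{L^2}\le C\nu^{1/2}$, $\|\nabla\phi^\nu\|_{L^2}\le C\nu^{-1/2}$, and $\supp\phi^\nu\subset\Omega_{c\nu}$. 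Redoing the energy estimate with this corrector, the dangerous terms become: (a) $\nu\int\nabla u^\nu:\nabla\phi^\nu$, which by Cauchy--Schwarz is bounded by $(\nu\|\nabla u^\nu\|_{L^2}^2)^{1/2}(\nu\|\nabla\phi^\nu\|_{L^2}^2)^{1/2}\le C(\nu\|\nabla u^\nu\|_{L^2}^2)^{1/2}$ — this is Kato's crucial observation, and it can be absorbed once we show the dissipation stays bounded; (b) terms involving $\int \nabla v : u^\nu\otimes\phi^\nu$ and $\int (u^\nu\cdot\nabla)\phi^\nu\cdot u^\nu$, the latter being the genuinely new term. For (b) the first is $O(\nu^{1/2})$ using $\|\phi^\nu\|_{L^2}\le C\nu^{1/2}$ and the uniform $L^2$ bound on $u^\nu$. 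The last term $\int_{\Omega_{c\nu}}(u^\nu\cdot\nabla\phi^\nu)\cdot u^\nu\,dx$ is where equicontinuity \eqref{inviscidLimAssum} enters: since $\nabla\phi^\nu$ is concentrated in a strip of width $O(\nu)$ with $|\nabla\phi^\nu|\lesssim \nu^{-1}$ in the normal direction and $O(1)$ tangentially, and since $\sup_{\Omega_\nu}|u^\nu|\to0$ in $L^3_t$, we estimate $|\int_{\Omega_{c\nu}}(u^\nu\cdot\nabla\phi^\nu)\cdot u^\nu| \lesssim (\sup_{\Omega_{c\nu}}|u^\nu|)\cdot \|u^\nu\|_{L^2(\Omega_{c\nu})}\cdot \|\nabla\phi^\nu\|_{L^2(\Omega_{c\nu})} \lesssim (\sup_{\Omega_{c\nu}}|u^\nu|)\cdot(C\nu^{1/2})\cdot(C\nu^{-1/2}) = C\sup_{\Omega_{c\nu}}|u^\nu| \to 0$ in $L^{3}_t$, hence in $L^1_t$ after H\"older. (Here I use that the tangential part of $\nabla\phi^\nu$ contributes a term bounded by $\|u^\nu\|_{L^2(\Omega_{c\nu})}^2 = o_\nu(1)$ directly since $|\Omega_{c\nu}|\to0$ and $u^\nu$ is uniformly $L^2$ — in fact also controlled by equicontinuity.)

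Assembling these bounds, the energy identity yields, for a.e. $t$,
\be
\frac12\|w^\nu(t)\|_{L^2}^2 + \frac12\,\nu\int_0^t\|\nabla u^\nu\|_{L^2}^2\,ds \le \frac12\|w^\nu(0)\|_{L^2}^2 + C\int_0^t\|w^\nu(s)\|_{L^2}^2\,ds + o_\nu(1),
\ee
where the factor $\tfrac12$ on the dissipation comes from absorbing term (a) via Young's inequality (this also shows $\nu\int_0^T\|\nabla u^\nu\|_{L^2}^2\,ds$ stays bounded, retroactively justifying the absorption). Gr\"onwall's inequality in $t$ then gives $\sup_{t\in[0,T]}\|w^\nu(t)\|_{L^2}^2\le e^{CT}(\|w^\nu(0)\|_{L^2}^2 + o_\nu(1))\to 0$, which is exactly strong convergence $u^\nu\to v$ in $L^\infty(0,T;L^2(\Omega))$.

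The main obstacle I anticipate is making the corrector construction fully rigorous on a general $C^3$ domain and tracking the anisotropy of $\nabla\phi^\nu$ carefully: the normal derivative is $O(\nu^{-1})$ but the tangential derivatives (and the divergence-correcting piece) are only $O(1)$ or $O(1)$ times the curvature of $\partial\Omega$, and one must verify that the term containing only tangential derivatives of $\phi^\nu$ is genuinely harmless (it is, being supported on a set of measure $O(\nu)$ against the uniformly-$L^2$ field $u^\nu$, but this needs the equicontinuity or at least the uniform $L^2$ bound to be invoked cleanly). A secondary technical point is the low regularity of $u_0$: since $u_0\in L^2$ only, one cannot assume $\|w^\nu(0)\|_{L^2}$ is small a priori beyond $u_0^\nu\to u_0$ in $L^2$ — but this is given, so $\|w^\nu(0)\|_{L^2}=\|u_0^\nu - u_0\|_{L^2}\to 0$ directly.
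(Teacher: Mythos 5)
Your proposal is correct and follows essentially the same route as the paper: a relative-energy (weak--strong) estimate with a divergence-free Kato-type corrector supported in an $O(\nu)$ strip, with the equicontinuity hypothesis used exactly where you place it --- to kill the $O(\nu^{-1})$ nonlinear boundary-layer term $\int (u^\nu\cdot\nabla\phi^\nu)\cdot u^\nu$ after exploiting no-slip to write $u^\nu(x)=u^\nu(x)-u^\nu(\pi(x))$. The paper realizes the corrector as $v-v^h$ with $v^h=\nabla^\perp(\theta_h\psi)$ built from the stream function of $v$ (so it is automatically solenoidal and compactly supported, hence an admissible test field), takes the \emph{refined} equicontinuity (a pointwise bound $\delta\,\gamma(d(x),t)$ with $\gamma$ only locally integrable in the wall distance, integrated via the tubular-neighborhood formula) rather than a sup bound, and controls the viscous term by putting the full Laplacian on $v^h$ and again invoking equicontinuity instead of absorbing $\nu\int\nabla u^\nu:\nabla\phi^\nu$ into the dissipation --- but these are implementation details, not a different argument.
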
 

We use the definition of refined equicontinuity at the boundary for this theorem, see Remark \ref{contatBnd}. 
\vspace{2mm}

   Under the conditions of Theorem \ref{thm:inviscid}, there can be no anomalous dissipation as a consequence of Kato's criteria \cite{Kato84}.  Of course, in two dimensions this theorem applies without the extra assumption of existence of a strong Euler solution whenever  $u_0\in H^s(\Omega)$ for $s>2$.  Therefore, in order for the strong inviscid limit to fail to hold in this setting, discontinuities must develop in a thin `Kato-type' boundary layer near the wall.  This could conceivably occur if singular vortex sheets are formed and shed in the limit $\nu\to 0$. Indeed,  Nguyen van yen, Farge, and Schneider \cite{Farge11} observed exactly this in a careful numerical study of the inviscid limit of 2d Navier-Stokes solutions with critical slip conditions.  In particular,  they find that energy dissipation is concentrated within a vortex sheet of thickness proportional to $\nu$ in the neighborhood of the boundary, and remains so as the sheet rolls up into a spiral and detaches from the wall.  They measure the cumulative dissipation with the thinning sheet and see that it becomes independent of $\nu$ for large Reynolds number, indicating the presence of a dissipative anomaly.  As discussed above, it is an open question whether or not weak Euler solutions can describe  the inviscid limit  and explain these observations (at least for critical Navier--slip and no-slip boundary conditions).  
{\begin{rem}
Theorems \ref{thm:onsagerNS'} is stated for any dimension though the most interesting applications are in 3D.  Theorem \ref{thm:inviscid} is interesting for both 2D and 3D in the presence of a background strong Euler solution. Stronger results in two-dimensions are known without boundary.  In particular, there is an interesting result in \cite{CFLS} showing that if any \emph{weak} Euler solution is obtained in the vanishing viscosity limit of strong Navier-Stokes solutions in $\mathbb{T}^2$, then it conserves the energy provided only that the initial vorticity is $\omega_0\in L^p(\mathbb{T}^2)$ with $p>1$.   Recall that in 2D, in view of  the scaling invariant embedding $W^{1, 3/2}\subset B^{1/3, \infty}_{3}$, the Onsager threshold corresponds to the integrability $L^p$ with $p>3/2$ for the vorticity.   Thus the property $\omega_0\in L^p(\mathbb{T}^2)$, $p>1$, which is propagated for all later times, is particularly interesting because for $p\in(1,3/2]$ these spaces are Onsager-supercritical. Such a result is completely open in the presence of physical boundaries. 
\end{rem}
}
   
Following Bardos \& Titi \cite{BT18}, the strategy of proof that we employ for Theorems \ref{thm:onsager}, \ref{thm:onsagerNS} and \ref{thm:onsagerNS'} is to study the equations of motion for the coarse-grained (mollified) and {localized} velocity field $\eta_{\ell, h}(d(x))\ol{u}_\ell(x)$ where
\be\label{locFilt}
\ol{u}_\ell(x) = \int_\Omega G_{\ell}(r) u(x+r)dr,
\ee
 with $x\in \Omega^\ell$ and  $G_\ell(r):= \ell^{-d} G\big({r}/{\ell}\big)$. Here $\ell$ and $h$ are small numbers. The localization function $\eta_{\ell, h}(d(x))$ vanishes in  the strip $\Omega_{h-\ell}$ and is identically $1$ outside the strip $\Omega_h$. The scale $\ell$ is chosen sufficiently small compared to $h$  so that $\eta_{\ell, h}(d(x))\ol{u}_\ell(x)$ is defined all over $\Omega$.   The corresponding localized resolved energy balance for $\eta_{\ell, h}(d(x))\frac{1}{2} |\ol{u}_\ell|^2$ is then analyzed and scale-transfer terms in the bulk and at the boundary are estimated. For Theorem \ref{thm:onsager}, in order to make the Besov regularity assumption \eqref{bndassum1} only in the interior, the idea is to take the {\it  iterated limit $\ell\to 0$ taken first, and then subsequently $h\to 0$}. Details are given in \S \ref{thm1pf}.   For Theorems \ref{thm:onsagerNS} and  \ref{thm:onsagerNS'}, optimizing the relationship between $\nu$ and $\ell$ gives $\ell\sim h\sim \nu^\beta$, which sets the ``boundary layer" thickness.  Corresponding near-wall assumptions are made just within that thin layer, and interior Besov regularity is again assumed.

  \begin{rem}
An alternative strategy would be to analyze the equations  for a coarse-grained velocity field with a \emph{variable} length scale $\ell:=\ell(x)$ defined by the operation
\be\label{LESmol}
\ol{u}_{\ell(x)}(x) = \int_\Omega G_{\ell(x)}(r) u(x+r)dr.
\ee
Modeling and simulating the resulting equations of motion is  the basic approach of large-eddy simulation (LES) in the context of wall-bounded turbulence.  The length-scale $\ell:=\ell(x)$ now depends on $x$ in such a way that that $\ell(x)\to 0$ as ${\rm dist}(x,\partial \Omega)\to 0$.  
 This presents a complication for modeling which is not present on domains without boundary and is well known to the LES community; such filtering does not commute with spatial gradients.   Commutation errors produce additional terms which are unclosed and require modeling \cite{Fureby97,Ghosal99,Bos05,Geurts06}.   The decreasing filtering length--scale at the wall also makes the LES modeling method prohibitively expensive, since very fine grids must be employed near the boundary, see discussion in \cite{Chen12}.  We do not adopt this approach here as localized filtering \eqref{locFilt} yields sharper results.  
 \end{rem}

 In sections \ref{thm1pf}, \ref{NSthmProof}, \ref{thm:onsagerNS'} and  \ref{thm4pf} we provide proofs  of Theorems  \ref{thm:onsager},  \ref{thm:onsagerNS},  \ref{thm:onsagerNS'} and  \ref{thm:inviscid} respectively.

\section{Proof of Theorem \ref{thm:onsager}}\label{thm1pf}
For the sake of simplicity, we will proceed as if $u$ is differentiable in time. The extra arguments needed to mollify in time are straightforward, see e.g. \S 2 of \cite{DE17comp}.   In addition, because the Euler equations is spatially translation invariant, we may assume without loss of generality that the origin is an interior point of $\Omega$. 
Since $\partial \Omega$ is $C^2$ and compact, there exists an $h(\Omega)$ sufficiently small such that 
\begin{enumerate}[label=(\roman*)]
\item for any $x\in \ol{\Omega_{h(\Omega)}}$, the function $x\mapsto d(x)$ is $C^1(\ol{\Omega_{h(\Omega)}})$,
\item for any $x\in \ol{\Omega_{h(\Omega)}}$, there exists a unique point $\pi(x)\in \partial\Omega$ such that 
\be\label{distDeriv}
d(x) = |x-\pi(x)|, \qquad \nabla d(x)= -\hat{n}(\pi(x)).  
\ee
\end{enumerate} 
See e.g. Bardos \& Titi \cite{BT18}. In what follows we will denote
\[
\Omega^a:=\Omega\setminus \Omega_a.
\]
Let  $\ell$ and $h$ be small numbers such that 
\be\label{scalerange}
0<\ell<\frac{h}{4}<\frac{\ve}{8}<h(\Omega).
\ee
Let $G\in C_0^\infty(\Omega)$ be a nonnegative smooth radial function satisfying ${\supp}(G)\subset B_1(0)$ and $\int_{\Omega}G(r)dr=1$. We denote $G_\ell(r):=\ell^dG({r}/{\ell})$ and define the usual mollifier
\be\label{mollif}
\ol{f}_\ell(x):=\int_\Omega G_\ell(r)f(x+r)dr,\quad x\in \Omega^\ell.
\ee
To further localize near the boundary, we introduce  $\eta_{\ell, h}:\mathbb{R}\mapsto [0,1]$ a smooth non-decreasing function\footnote{The construction of such a function is standard and can be accomplished, for example, by mollifying the affine function
$$
 \eta_\ve(x) = (\varphi_\ve * \eta)(x), \qquad \eta(x) = \begin{cases} 0 & x<a\\
\frac{x-a}{b-a} & a\leq x<b\\
1& x\geq b
\end{cases},
$$
specifying $a,b$ such that $h-\ell<a<b<h$ and $\ve=b-a=\ell/4$ and setting $ \eta_{\ell, h}(x):=\eta_\ve(x)$.} which is $0$ on $\in(-\infty, h-\ell]$ and $1$ on $[h,\infty)$ and is such that $\| \eta_{\ell, h}'\Vert_{L^\infty(\Rr)}\le C\ell^{-1}$ for some constant $C$ independent of $\ell$ and $h$.  We denote $\theta_{\ell,h}(x):= \eta_{\ell, h}(d(x))$. See Figure \ref{fig}.

\begin{figure}[h!]
\centering
\includegraphics[width=0.35 \textwidth]{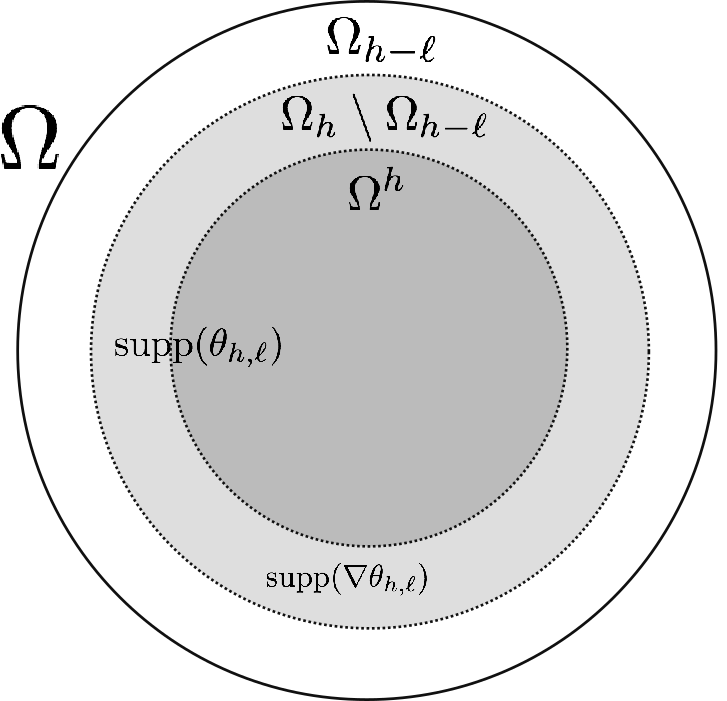}
\caption{
Localization region depicted.
}\label{fig}
\end{figure}

We choose test functions of the form $\varphi(r;x)= \theta_{\ell,h}(x) \ol{u}_\ell(x) G_{\ell}(r-x)$ in Eqn. \eqref{weakEuler}.  Note that such functions are infinitely differentiable and satisfy ${\rm supp}_r (\varphi)\subset \Omega$ due to the localization.  This is because $x\in \Omega^{h-\ell}$ on the support of $\theta_{\ell,h}(x)$, and also $|x-r|\leq \ell$ on the support of $G_{\ell}$, so that for all $r\in \Omega_{h(\Omega)}\cap{\rm supp}_r (\varphi)$
\[
d(r) = |r-\pi(r)|\geq |x-\pi(r)| - |x-r| \geq d(x) - \ell\geq h-2\ell>h/2
\]
using \eqref{scalerange}. Thus, $\varphi$ is supported away from the boundary in $r$.  Then, in the formula  Eqn. \eqref{weakEuler} the integration is over the variable $r$ and we see that
%\begin{align*}
%\theta_{\ell,h}(x)& \ol{u}_\ell(x) \cdot \int_\Omega     \partial_tu(r)   G_{\ell}(r-x)   dr \\
%&  - \theta_{\ell,h}(x)   \ol{u}_\ell(x) \cdot \int_\Omega     (u\otimes u)(r)   \cdot \nabla_r G_{\ell}(r-x)   dr  \\
%&\qquad \quad - \theta_{\ell,h}(x)   \ol{u}_\ell(x) \cdot   \int_\Omega p(r) \nabla_r G_{\ell}(r-x)  dr =0.
%\end{align*}
%Therefore
\begin{align}\nonumber
\theta_{\ell,h}(x) \ol{u}_\ell(x) \cdot \int_{\Omega-\{x\}}     \partial_tu(x+r)   G_{\ell}(r)   dr  - &\theta_{\ell,h}(x)   \ol{u}_\ell(x) \cdot \int_{\Omega-\{x\}}    (u\otimes u)(x+r)   \cdot \nabla_r G_{\ell}(r)   dr \\
&\qquad - \theta_{\ell,h}(x)   \ol{u}_\ell(x) \cdot   \int_{\Omega-\{x\}} p(x+r) \nabla_r G_{\ell}(r)  dr =0.
\end{align}
Provided ${\rm supp}(G_\ell)\subseteq B_\ell(0) \subset \Omega-\{x\}$, the domain of integration can be replaced by $\Omega$ in the above formula and we may write the expression in terms of mollifications.  This is the case when $x\in \Omega^{\ell}$.  The support of $\theta_{\ell,h}$ is contained in $\Omega^{h-\ell}\subset \Omega^{3\ell}\subset \Omega^\ell$ because we chose $h>4\ell$.   Thus, for all $x\in \Omega$, we have
\begin{align*}
\theta_{\ell,h}\Big[\partial_t \ol{u}_\ell \cdot \ol{u}_\ell   +    \ol{u}_\ell \cdot  \nabla \cdot   (\ol{u}_\ell\otimes \ol{u}_\ell) + \ol{u}_\ell \cdot \nabla\cdot \tau_\ell(u,u)  +  \ol{u}_\ell \cdot \nabla   \ol{p}_\ell\Big] =0
\end{align*}
where we have introduced the coarse-graining cumulant $\tau_\ell(u,u) = \ol{(u\otimes u)}_\ell - \ol{u}_\ell\otimes \ol{u}_\ell$ and used the fact that mollification commutes with spatial gradients.  After performing some straightforward calculations, we obtain the localized resolved kinetic energy balance
\begin{align}
\partial_t\left(\theta_{\ell,h} \frac{1}{2} |\ol{u}_\ell|^2\right) &+  \nabla \cdot \Big(\theta_{\ell,h} J_{\ell, h}[u] \Big)= \Pi_{\ell, h}[u] + B_{\ell, h}[u]
\end{align}
where we have introduced the \emph{spatial current} $J_{\ell, h}$ 
\be
J_{\ell, h}[u]  := \left(\frac{1}{2}|\ol{u}_\ell|^2 +\ol{p}_\ell \right) \ol{u}_\ell+ \ol{u}_\ell \cdot \tau_\ell(u,u)
\ee
as well as the \emph{bulk energy flux} $\Pi_{\ell, h}$ and the \emph{inertial boundary production} $B_{\ell, h}$ 
\begin{align}\label{piell}
&\Pi_{\ell, h}[u] = \theta_{\ell,h}  \nabla \ol{u}_\ell :\tau_\ell(u,u),\\ \label{Bell}
& B_{\ell, h}[u] = \nabla \theta_{\ell,h} \cdot J_{\ell, h}[u]. 
\end{align}
Integrating over space and time, we have the global resolved energy balance for all $t\in [0, T]$
\be\label{CGBalance}
\int_\Omega \theta_{\ell,h}(x) \frac{1}{2} |\ol{u}_\ell(t)|^2 dx = \int_\Omega \theta_{\ell,h}(x) \frac{1}{2} |\ol{u}_\ell(0)|^2 dx  +\int_0^t  \int_\Omega \Pi_{\ell, h}[u(x,t)] dx ds + \int_0^t  \int_{\Omega} B_{\ell, h}[u(x,t)]  dxds.
\ee
To obtain \eqref{CGBalance}, we used the divergence theorem and the fact that $\theta_{\ell,h}(x)= 0$ for $x\in \partial\Omega$ to deduce that the integral $ \int_\Omega \nabla \cdot \Big(\theta_{\ell,h} J_{\ell, h}[u] \Big)dx=0$.

We now develop appropriate estimates for the bulk flux and inertial boundary production terms which vanish under our regularity assumptions in the iterated limit $\ell\to 0$ first, and subsequently $h\to 0$.

\subsection{Estimate for the bulk energy flux $\Pi_{\ell, h}$}\label{bulkFluxEstimates}

First we remark that  $\Pi_{\ell, h}$ is supported on the same set as $\theta_{h,\ell}$, namely $\Omega^{h-\ell}$. We have therefore that
\be\label{piterms}
\int_0^t \| \Pi_{\ell, h}[u]\|_{L^1(\Omega)}ds= \int_0^t \| \Pi_{\ell, h}[u]\|_{L^1(\Omega^{h-\ell})}ds.
\ee
Since $0$ is an interior point of $\Omega$, $\int_\Omega (\nabla G)_\ell(r)dr=0$ provided that $\ell$ is sufficiently small. Mollified gradients of $f\in L^1(\Omega)$ can be expressed in terms of increments
\begin{align}\label{derivateEst}
\nabla \ol{f}_\ell(x) &= -\frac{1}{\ell}\int_{\Omega} (\nabla G)_{\ell}(r) \delta f(r;x) dr, \qquad \delta f(r;x):= f(x+r)-f(x)
\end{align}
for  $x\in \Omega^\ell$.  The  Constantin-E-Titi \cite{CET94} commutator identity shows the same is true for the cumulants 
\begin{align}\label{tauexp}
{\tau}_\ell(f,g) &=\int_{\Omega}  G_{\ell}(r)  \delta f(r;x)\otimes  \delta g(r;x) dr- \left(\int_{\Omega}  G_{\ell}(r)  \delta f(r;x)dr\right)\otimes\left(\int_{\Omega}  G_{\ell}(r)  \delta g(r;x)dr\right)
\end{align}
for all $f,g \in L^2(\Omega)$ and for all $x\in\Omega^\ell$.  
 We note that  $x+r$ belongs to $\Omega^{h/2}$ for all $x\in\Omega^{h-\ell}$ and $|r|<\ell<h/4$. Thus, both $x$ and $x+r$ lie in $\Omega^{h/2}\Subset \Omega$. { In view of \eqref{derivateEst} and \eqref{tauexp}, $\Pi_{\ell, h}$ is a sum of two terms which can be treated in the same way. Let us consider just first term 
  \[
I=-\frac{1}{\ell}\theta_{\ell, h}\int_0^t\int_{\Omega^{h-\ell}}\left\{\int_{\Omega} (\nabla G)_{\ell}(r)\delta u(r;x) dr:\int_{\Omega}  G_{\ell}(r)  \delta u(r;x)\otimes  \delta u(r;x) dr\right\}dxds.
 \]
 This contribution can be estimated as follows
 \begin{align*}
 I&\le \frac{1}{\ell}  \left\|\int_{\Omega} (\nabla G)_{\ell}(r) \delta u(r;x) dr \right\|_{L^3_{x, s}(\Omega^{h-\ell}\times (0, t))}\left\|\int_{\Omega}  G_{\ell}(r)  \delta u(r;x)\otimes  \delta u(r;x) dr \right\|_{L^{3/2}_{x,s}(\Omega^{h-\ell}\times (0, t))}\\
 &\le \frac{1}{\ell} \int_{\Omega} |(\nabla G)_{\ell}(r)||r|^{1/3}\frac{\| \delta u(r;\cdot) \|_{L^3(0, t; L^3(\Omega^{h-\ell}))}}{|r|^{1/3}}dr\int_{\Omega}  G_{\ell}(r) |r|^{2/3}\frac{\| \delta u(r;\cdot) \|_{L^3(0, t; L^3(\Omega^{h-\ell}))}^2}{|r|^{2/3}}dr\\
  &\le \Big(\sup_{|r|<\ell}\frac{\| \delta u(r;\cdot) \|_{L^3(0, t; L^3(\Omega^{h-\ell}))}}{|r|^{1/3}}\Big)^3\int_{\mathbb{R}^d} |\rho|^{1/3} |\nabla G(\rho)|d\rho\int_{\mathbb{R}^d}  |\rho|^{2/3}G(\rho)d\rho\\
  &\le C\Big(\sup_{|r|<\ell}\frac{\| \delta u(r;\cdot) \|_{L^3(0, t; L^3(\Omega^{h-\ell}))}}{|r|^{1/3}}\Big)^3
 \end{align*}
 where we used H\"older's inequality and $G$ is smooth and compactly supported. Using condition \ref{bndassum1} we deduce that $\lim_{\ell\to 0}I=0$. The other term follows by an analogous computation.} Therefore, for any $h$ fixed,
\be\label{limFlux}
\lim_{\ell\to 0}  \int_0^t\!  \| \Pi_{\ell, h}[u(s)] \|_{L^1(\Omega)} ds =0 \qquad\forall t\in (0, T).
\ee
\subsection{Estimate for the inertial boundary production $B_{\ell, h}$} \label{boundaryEstEuler}
Note that $B_{\ell, h}[u]$ can be expressed as
\begin{align}
B_{\ell, h}[u]  &= -  \eta_{\ell,h}'(d(x)) \left( \left(\frac{1}{2}|\ol{u}_\ell|^2 +\ol{p}_\ell \right)\hat{n}(\pi(x))\cdot \ol{u}_\ell+ \ol{u}_\ell \cdot \tau_\ell(u,u)\cdot \hat{n}(\pi(x))\right) \label{Bform}
\end{align}
and has the same support as $\eta_{\ell,h}'$, namely $\Omega_{h}\setminus \Omega_{h-\ell}\subset \Omega_\ve$.  
We write
 \begin{align}\nonumber
 \hat{n}(\pi(x))\cdot \ol{u}_\ell(x)&= \int_\Omega G_{\ell}(r)\ [\hat{n}(\pi(x))- \hat{n}(\pi(x+r))]\cdot u(x+r)  dr + \int_\Omega G_{\ell}(r)\ \hat{n}(\pi(x+r))\cdot u(x+r)dr.
  \end{align}  
  %Using the fact that  $u\cdot \hat{n}|_{x\in \partial \Omega}=0$, we write
 % \be
 %  \hat{n}\circ \sigma \cdot u\Big|_{x+r}=\hat{n}\circ \sigma \cdot u\Big|_{x+r}- \hat{n}\circ \sigma \cdot u\Big|_{\sigma(x+r)}
 % \ee
Since   $\hat{n}\circ \pi$ is continuous in $\ol{\Omega_\ve}$, for any $\delta>0$ there exists $\rho=\rho(\delta)>0$ such that
\be \label{Bbd1}
 |\hat{n}(\pi(x))- \hat{n}(\pi(x+r))|\le \delta%,\quad | \hat{n}(\pi(x+r))\cdot u(x+r)|\le C\delta\gamma(t)
\ee
 for all $x\in \Omega_{h}\setminus \Omega_{h-\ell}$ and $|r|<\ell<\rho$. It follows that
  \be \label{Bbd2}
  |\hat{n}(\pi(x))\cdot \ol{u}_\ell(x,t)| \lesssim \delta \|u(t)\|_{L^\infty(\Omega_{\ve})}+ \sup_{x\in\Omega_{2h}} |\hat{n}(\pi(x))\cdot u(x,t)|.
 \ee
 Recalling the definition of $\tau_\ell(u, u)$ we obtain similarly that
 \be  \label{Bbd3}
 | \tau_\ell(u,u)\cdot \hat{n}(\pi(x))| \lesssim \Big(\delta \|u(t)\|_{L^\infty(\Omega_{\ve})}+ \sup_{x\in\Omega_{2h}} |\hat{n}(\pi(x))\cdot u(x,t)|\Big) \|u(t)\|_{L^\infty(\Omega_{\ve})}.
 \ee
Using these bounds, together with the facts that $\Vert \eta'_{\ell, h}(d(x))\Vert_{L^\infty}\le C\ell^{-1}$ and $|\Omega_h\setminus\Omega_{h-\ell}|\le C\ell$, we obtain
\begin{align}\nonumber
\int_0^t  \|B_{\ell, h}[u(s)]\|_{L^1(\Omega)} ds&\lesssim \|u\|_{L^3(0,T;L^\infty(\Omega_{\ve}))}^2 \|\sup_{x\in\Omega_{2h}} |\hat{n}(\pi(x))\cdot u(x,\cdot)|\|_{L^3((0,T))} \\
&\quad +  \delta\Big(\|u\|_{L^3(0,T;L^\infty(\Omega_{\ve}))}^3+  \|u\|_{L^3(0,T;L^\infty(\Omega_{\ve}))} \|p\|_{L^{3/2}(0,T;L^\infty(\Omega_{\ve}))}\Big)  \label{Bbd4}
\end{align}
provided $\ell<\rho$. Thus, by assumptions \eqref{bndassum1} and \eqref{bndassum3}, the boundary production has vanishing contribution
\be\label{limBP}
\lim_{h\to 0} \lim_{\ell\to 0}\int_0^t  \|B_{\ell, h}[u(s)]\|_{L^1(\Omega)} ds =0.
\ee
\subsection{Convergence of resolved energy}\label{resolvedEnergyConv}
%By Proposition 2.1 of Bardos \& Titi, we have convergence of the resolved energy as $h,\ell\to 0$ for all $t\in [0, T]$.
In view of \eqref{CGBalance}, \eqref{limFlux}, and \eqref{limBP}, we have for all $t\in [0,T]$
\be\label{resEcons}
\lim_{h\to 0}\lim_{\ell\to 0}\left| \int_\Omega \theta_{\ell,h}(x) \frac{1}{2} |\ol{u}_\ell(t)|^2 dx -\int_\Omega \theta_{\ell,h}(x) \frac{1}{2} |\ol{u}_\ell(0)|^2 dx  \right| =0.
\ee
Combining \eqref{resEcons} with the following lemma in the case of $p=2$, we conclude that $u$ conserves energy. 
\begin{lemma}
For any $f\in L^p(\Omega)$, $p\in [1, \infty]$, we have
\be
\lim_{h\to 0}\lim_{\ell\to 0}\Vert  \theta_{\ell,h}  \overline f_\ell-f\Vert_{L^p(\Omega)}=0.
\ee
\end{lemma}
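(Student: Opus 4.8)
The plan is to split the difference into a piece that is a genuine mollification error on a \emph{fixed} compact subset of $\Omega$ and a piece that is controlled purely by the Lebesgue measure of the shrinking collar $\Omega_h$, then handle the first as $\ell\to 0$ with $h$ held fixed, and the second as $h\to 0$. Recall $\theta_{\ell,h}=\eta_{\ell,h}(d(\cdot))$ vanishes on $\Omega_{h-\ell}$, equals $1$ on $\Omega^h$, and satisfies $0\le\theta_{\ell,h}\le 1$; since $\ell<h/4$ its support $\Omega^{h-\ell}$ lies inside the fixed compact set $\Omega^{h/2}\Subset\Omega$ (because $h-\ell>h/2$), and there $\overline{f}_\ell$ is well defined, so $\theta_{\ell,h}\overline{f}_\ell$ extended by zero is a genuine element of $L^p(\Omega)$. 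On all of $\Omega$ one has the pointwise identity
\[
\theta_{\ell,h}\overline{f}_\ell-f=\theta_{\ell,h}(\overline{f}_\ell-f)\mathbf{1}_{\Omega^{h-\ell}}+(\theta_{\ell,h}-1)f ,
\]
the first summand being absent on the collar $\Omega_{h-\ell}$ where $\theta_{\ell,h}=0$. Taking $L^p$ norms and using $0\le\theta_{\ell,h}\le 1$ together with $\Omega^{h-\ell}\subset\Omega^{h/2}$ yields
\[
\|\theta_{\ell,h}\overline{f}_\ell-f\|_{L^p(\Omega)}\le\|\overline{f}_\ell-f\|_{L^p(\Omega^{h/2})}+\|(1-\theta_{\ell,h})f\|_{L^p(\Omega)}.
\]

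First I would fix $h$ and let $\ell\to 0$. The term $\|\overline{f}_\ell-f\|_{L^p(\Omega^{h/2})}$ is a standard mollification error on the fixed compact set $\Omega^{h/2}\Subset\Omega$, hence tends to $0$ (for $p\in[1,\infty)$, the range we treat, which is the only case used, with $p=2$). For the second term, $1-\theta_{\ell,h}$ is supported in $\{d(\cdot)<h\}=\Omega_h$ and bounded by $1$, so $\|(1-\theta_{\ell,h})f\|_{L^p(\Omega)}\le\|f\|_{L^p(\Omega_h)}$ uniformly in $\ell$ — and in fact, since $1-\theta_{\ell,h}\to\mathbf{1}_{\Omega_h}$ a.e.\ as $\ell\to 0$, dominated convergence upgrades this to $\lim_{\ell\to 0}\|(1-\theta_{\ell,h})f\|_{L^p(\Omega)}=\|f\|_{L^p(\Omega_h)}$, though only the bound is needed. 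Thus $\limsup_{\ell\to 0}\|\theta_{\ell,h}\overline{f}_\ell-f\|_{L^p(\Omega)}\le\|f\|_{L^p(\Omega_h)}$.

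Finally I would send $h\to 0$. Since $f\in L^p(\Omega)$ and $\bigcap_{h>0}\Omega_h=\emptyset$ (every interior point of $\Omega$ has positive distance to $\partial\Omega$), dominated convergence gives $\|f\|_{L^p(\Omega_h)}\to 0$, and combining this with the previous estimate proves $\lim_{h\to 0}\lim_{\ell\to 0}\|\theta_{\ell,h}\overline{f}_\ell-f\|_{L^p(\Omega)}=0$. I do not expect a genuine obstacle: the argument is routine. The only points needing minor care are the bookkeeping that every support $\Omega^{h-\ell}$ on which the mollifier is evaluated sits inside a single compact set that depends on $h$ but not on $\ell$, and the restriction to $p<\infty$ — mollification of a generic bounded function need not converge in $L^\infty$, so the stated range should be understood as $p\in[1,\infty)$, which suffices for the application.
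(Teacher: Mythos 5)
Your proof is correct and follows essentially the same route as the paper's: both split off the mollification error on a set a fixed positive distance from the boundary (sent to zero as $\ell\to 0$ with $h$ fixed) and bound the remaining collar contribution by a constant times $\Vert f\Vert_{L^p(\Omega_h)}$, which vanishes as $h\to 0$ by absolute continuity of the integral. Your caveat that the argument (like the paper's, which invokes $\ol{f}_\ell\to f$ in $L^p_{loc}$) genuinely requires $p<\infty$ is a fair observation, but harmless since only $p=2$ is used.
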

\begin{proof}
Fix a small number $h>0$ and consider $0<\ell<h/4$. Since $f\in L^p(\Omega)$ we know that $\ol{f}_\ell\to f$ in $L^p_{loc}(\Omega)$ as $\ell\to 0$ (see e.g. Appendix C.5 \cite{Evans10}). In particular, $\ol{f}_\ell\to f$ in $L^p(\Omega^h)$  as $\ell\to 0$. Next, 
\begin{align*}
\Vert  \theta_{\ell,h} \overline f_\ell-f\Vert_{L^p(\Omega_h)}&=\Vert  \theta_{\ell,h}  \overline f_\ell-f\Vert_{L^p(\Omega_h\setminus \Omega_{h-\ell})}+\Vert f\Vert_{L^p(\Omega_{h-\ell})}\\ 
&\leq \Vert (\ol f_\ell-f) \theta_{\ell,h} \Vert_{L^p(\Omega_h\setminus \Omega_{3h/4})}+ \Vert ( \theta_{\ell,h} -1)f \Vert_{L^p(\Omega_h\setminus \Omega_{3h/4})}+\Vert  f\Vert_{L^p(\Omega_{3h/4})}\\
&\leq \Vert \ol f_\ell-f\Vert_{L^p(\Omega_h\setminus \Omega_{3h/4})}+2\Vert  f\Vert_{L^p(\Omega_{h})}
\end{align*}
where we used the fact that  $\Omega_h\setminus \Omega_{h-\ell}\subset \Omega_h\setminus \Omega_{3h/4}$. Since $h$ is fixed, the closure of $\Omega_h\setminus \Omega_{3h/4}$ is contained in $\Omega$. Hence, $\Vert  \ol f_\ell-f\Vert_{L^p(\Omega_h\setminus \Omega_{3h/4})}\to 0$ as $\ell\to 0$. Finally, letting $h\to 0$ we have $\Vert  f\Vert_{L^p(\Omega_{h})}\to 0$ by Lebesgue dominated convergence theorem.
\end{proof}

\section{Proof of Theorem \ref{thm:onsagerNS}}\label{NSthmProof}

 We take the same approach as for the proof of Theorem \ref{thm:onsager}.   A priori, Leray-Hopf solutions are known to satisfy the weak form \eqref{weakNS} with divergence-free test fields, however the localized-mollified velocity we wish to test against to obtain the resolved energy balance is not solenoidal.    We must therefore appeal to the following Theorem of \cite{Jmm14} (see also \cite{Giga91,Sohr86} for corresponding results with smoother initial data):
 
\begin{thm}[\protect{see \cite[Theorem 1]{Jmm14}}]\label{reintro:p}
 Let $\Omega\subset \mathbb{R}^d$ be a bounded domain with  $C^2$ boundary, $\partial \Omega$ and $u^\nu$ be a Leray-Hopf weak solution to the Navier-Stokes equations \eqref{NSeq}--\eqref{NSbc} with initial data $u_0^\nu\in H(\Omega)$. There exists a pressure $p^\nu\in L^{r_0}(0, T; W^{1, s_0}(\Omega))$ for some $r_0,~s_0\in (1, \infty)$ depending only on  $d$, such that 
   \be\label{weakNS2}
 \int_0^T\int_\Omega  \big(u^\nu \cdot  \partial_t\varphi   +  u^\nu \otimes u^\nu  :  \nabla  \varphi  +p^\nu \nabla \cdot  \varphi  +\nu u^\nu\cdot \Delta \varphi)dxdt =0
\ee
for all $\varphi\in C_0^\infty(\Omega\times (0, T))$.
\end{thm}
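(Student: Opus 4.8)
The plan is to recover the pressure by a de Rham-type argument adapted to the bounded domain $\Omega$, and then to establish its regularity by splitting it into a nonlinear (Euler-type) part carrying homogeneous Neumann data and a harmonic viscous part carrying the boundary flux. The point of the theorem is precisely to extend \eqref{weakNS} from divergence-free test fields to \emph{all} test fields by producing a genuine function $p^\nu$, with quantitative regularity, even for merely $L^2$ initial data $u_0^\nu\in H(\Omega)$.

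First I would reformulate \eqref{weakNS} as the assertion that the space-time distribution $R:=\partial_t u^\nu+\nabla\cdot(u^\nu\otimes u^\nu)-\nu\Delta u^\nu$ annihilates every divergence-free $\varphi\in C_0^\infty(\Omega\times(0,T))$. On a bounded $C^2$ domain the Helmholtz--Leray decomposition $L^s(\Omega)=L^s_\sigma(\Omega)\oplus\nabla W^{1,s}(\Omega)$ is available for all $s\in(1,\infty)$ (equivalently, the Bogovskii/Ne\v{c}as inf-sup inequality holds), so de Rham's theorem yields a distribution $p^\nu$ with $R=-\nabla p^\nu$ together with a quantitative bound $\|p^\nu\|\lesssim\|R\|$ in the appropriate negative-order norm. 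Since $\partial_t u^\nu$ is divergence-free in space and satisfies $\hat{n}\cdot\partial_t u^\nu|_{\partial\Omega}=\partial_t(\hat{n}\cdot u^\nu)=0$, it carries no gradient part under $\mathbb{Q}:=I-\mathbb{P}$, and the pressure is determined solely by the inertial and viscous terms, $\nabla p^\nu=\mathbb{Q}\big[\nu\Delta u^\nu-\nabla\cdot(u^\nu\otimes u^\nu)\big]$. Testing this identity against arbitrary $\varphi\in C_0^\infty$ and integrating by parts recovers exactly \eqref{weakNS2}, so it remains only to prove the stated $L^{r_0}_tW^{1,s_0}_x$ regularity.

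I would then split $p^\nu=p_1^\nu+p_2^\nu$ according to $\mathbb{Q}$. The \emph{nonlinear part} solves the Neumann problem $-\Delta p_1^\nu=\partial_i\partial_j(u_i^\nu u_j^\nu)$ with $\partial_{\hat{n}}p_1^\nu=0$; crucially the boundary datum is homogeneous, because $\nabla\cdot(u^\nu\otimes u^\nu)=(u^\nu\cdot\nabla)u^\nu$ vanishes identically on $\partial\Omega$ by the no-slip condition. Using the Leray--Hopf bounds $u^\nu\in L^\infty(0,T;L^2)\cap L^2(0,T;H^1_0)$ together with the Ladyzhenskaya--Sobolev inequality, I would place $\nabla(u^\nu\otimes u^\nu)\sim u^\nu\nabla u^\nu$ in $L^{r_0}(0,T;L^{s_0}(\Omega))$ for some $r_0,s_0\in(1,\infty)$ depending only on $d$ (taking $r_0=s_0$ equal to the Ladyzhenskaya exponent, e.g. $5/4$ for $d=3$), whereupon Calder\'on--Zygmund/elliptic regularity for the Neumann Laplacian gives $p_1^\nu\in L^{r_0}(0,T;W^{1,s_0}(\Omega))$.

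\textbf{Main obstacle.} The \emph{viscous part} $p_2^\nu$ is harmonic, $\Delta p_2^\nu=0$, with the rough Neumann datum $\partial_{\hat{n}}p_2^\nu=\nu\,\hat{n}\cdot\Delta u^\nu$ on $\partial\Omega$, and this is where the difficulty concentrates: since $u^\nu$ lies only in $H^1_0$, the trace $\hat{n}\cdot\Delta u^\nu$ is not classically defined, and its criticality is exactly why interior elliptic theory alone does not suffice (and why the result is a cited theorem rather than routine). I would tame it by rewriting $\nu\,\hat{n}\cdot\Delta u^\nu=-\nu\,\hat{n}\cdot(\nabla\times\omega^\nu)=-\nu\,\mathrm{div}_{\partial\Omega}(\hat{n}\times\omega^\nu)$ using $\nabla\cdot u^\nu=0$, so that the datum is a tangential derivative of a vorticity in $L^2$, and then interpreting the harmonic Neumann problem by \emph{transposition} against solutions of the dual homogeneous-Neumann Laplace problem (cf. \cite{EscMon})---this is the same mechanism underlying the Stokes resolvent estimates of Giga--Sohr \cite{Giga91} and Sohr--von Wahl \cite{Sohr86}. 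Combined with a density/approximation argument that passes from smooth initial data, where the pressure is classical, to $u_0^\nu\in H(\Omega)$ via these uniform estimates, this yields $p_2^\nu\in L^{r_0}(0,T;W^{1,s_0}(\Omega))$ after possibly shrinking $r_0,s_0$. Adding $p_1^\nu$ and $p_2^\nu$ gives the desired pressure, completing the proof.
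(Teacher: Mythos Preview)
The paper does not contain a proof of this theorem. It is stated as a quotation of an external result --- ``the following Theorem of \cite{Jmm14}'' --- and is invoked as a black box so that non-solenoidal test functions can be used against Leray--Hopf solutions. There is therefore nothing in the paper to compare your proposal against.

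For what it is worth, your sketch is a plausible outline of how such a pressure-reconstruction theorem is established in the literature: a de Rham/Bogovskii argument to produce $p^\nu$ as a distribution, followed by a splitting into a nonlinear piece handled by Calder\'on--Zygmund theory for the Neumann Laplacian and a harmonic viscous piece carrying the wall data. The step you correctly flag as the delicate one --- making sense of and estimating the harmonic part with datum $\nu\,\hat n\cdot\Delta u^\nu$ when $u^\nu$ is only in $L^2_tH^1_x$ --- is indeed the heart of the matter in \cite{Jmm14} (and is also why the earlier results \cite{Giga91,Sohr86} assumed more regular data). One small caution: your claim that $\partial_{\hat n}p_1^\nu=0$ because $(u^\nu\cdot\nabla)u^\nu$ ``vanishes identically on $\partial\Omega$'' is formally correct thanks to no-slip, but at the Leray--Hopf regularity level $(u^\nu\cdot\nabla)u^\nu$ has no classical trace, so this identification has to be made variationally (as you in effect do for $p_2^\nu$). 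Apart from that, the argument you outline is in the spirit of the cited references, but the paper itself simply imports the conclusion.
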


Thus, we may choose smooth test functions $\varphi(r;x)= \theta_{\ell,h}(x) \ol{u}_\ell(x) G_{\ell}(r-x)$ in Eqn. \eqref{weakNS2}.   As a result, we obtain the localized resolved kinetic energy balance
\begin{align}\label{ebal1}
\partial_t\left(\theta_{\ell,h} \frac{1}{2} |\ol{(u^\nu)}_\ell|^2\right) &+  \nabla \cdot \Big(\theta_{\ell,h} J_{\ell, h}^\nu[u^\nu] \Big)= \Pi_{\ell, h}[u^\nu] + B_{\ell, h}^\nu[u^\nu] - D_{\ell,h}^\nu[u^\nu]
\end{align}
with $\Pi_{\ell,h}$ defined by \eqref{piell}, $B_{\ell,h}^\nu$ is defined as in \eqref{Bell} but with the space transfer term replaced by 
\be
J_{\ell, h}^\nu[u^\nu] :=  \left(\frac{1}{2}|\ol{(u^\nu)}_\ell|^2 +\ol{(p^\nu)}_\ell \right) \ol{(u^\nu)}_\ell+ \ol{(u^\nu)}_\ell \cdot \tau_\ell(u^\nu,u^\nu) - \nu \nabla \ol{(u^\nu)}_\ell\cdot \ol{(u^\nu)}_\ell
\ee
and a new \emph{resolved dissipation} term
\be
 D_{\ell,h}^\nu[u^\nu]:= \nu \theta_{h,\ell} |\nabla \ol{(u^\nu)}_\ell|^2.
 \ee
Since $\theta_{h,\ell}|_{\partial \Omega}=0$,  the spatial divergence term in \eqref{ebal1} vanishes upon integration over $\Omega$ and we have
\begin{align}\nonumber
\int_\Omega\theta_{\ell,h}(x) &\frac{1}{2} |\ol{(u^\nu)}_\ell(t)|^2 dx = \int_\Omega\theta_{\ell,h}(x) \frac{1}{2} |\ol{(u^\nu)}_\ell(0)|^2 dx  \\
&\quad+\int_0^t  \int_\Omega \Pi_{\ell,h}[u^\nu(s)] dx ds + \int_0^t  \int_{\Omega_{h}} B_{\ell,h}^\nu[u^\nu(s)]dx ds -\int_0^t  \int_\Omega D_{\ell,h}^\nu[u^\nu(s)] dx ds
\end{align}
for all $t\in [0, T]$.  
We set the scales $\ell$ and $h$ to vary with viscosity according to
\be\label{scales}
 h=6\ell=\frac{3}{4} \nu^\beta, \qquad \red{\beta=\min\left\{1,\frac{1}{2(1-\sigma)}\right\}}.
\ee 
We now show that the contributions of $ D_{\ell,h}^\nu[u^\nu]$, $\Pi_{\ell,h}[u^\nu]$ and $B_{\ell,h}^\nu[u^\nu]$ vanish in the limit $\nu,\ell,h\to 0$ taken in the order set by \eqref{scales}.  We start by bounding the resolved dissipation.

\subsection{Estimate for the resolved dissipation $D_{\ell, h}^\nu$}\label{resolvDiss}

First we remark that  $D_{\ell, h}^\nu$ is supported in $\Omega^{h-\ell}$, hence
\[
\int_0^t \| D_{\ell, h}^\nu[u^\nu]\|_{L^1(\Omega)}ds= \int_0^t \| D_{\ell, h}^\nu[u^\nu]\|_{L^1(\Omega^{h-\ell})}ds.
\]
The mollified derivatives are then bounded using the formula \eqref{derivateEst} in terms of the assumed Besov regularity. In particular, note that that  $x+r$ belongs to $\Omega^{h/2}$ for all $x\in\Omega^{h-\ell}$ and $|r|<\ell<h/4$. Thus, both $x$ and $x+r$ lie in $\Omega^{2h/3}\subseteq \Omega^{\nu^\beta/2}\Subset \Omega$.  By assumption \eqref{bndassum1NS}, $u^\nu$ is uniformly bounded in $L^3(0,T; B_3^{\sigma,\red{c_0}}(\Omega^{\nu^\beta/2}))$ with \red{$\sigma\geq 1/3$}. We now estimate
\red{
 \begin{align*}
\int_0^t \| D_{\ell, h}^\nu[u^\nu]\|_{L^1(\Omega)}ds &\le \frac{\nu}{\ell^2}\left\| \int_{\Omega}  (\nabla G)_{\ell}(r) \delta u(r;x) dr\right\|_{L^3(0, t; L^3_x(\Omega^{h-\ell}))}\left\| \int_{\Omega}  (\nabla G)_{\ell}(r) \delta u(r;x) dr\right\|_{L^3(0, t; L^3_x(\Omega^{h-\ell}))}\\
&\le \frac{\nu}{\ell^2}\left( \int_{\Omega}  |(\nabla G)_{\ell}(r)||r|^{\sigma}\frac{\| \delta u(r;\cdot) \|_{L^3(0, t; L^3(\Omega^{h-\ell}))}}{|r|^{\sigma }}dr\right)^2\\
  &\le \nu\ell^{2(\sigma-1)}\Big(\sup_{|r|<\ell}\frac{\| \delta u(r;\cdot) \|_{L^3(0, t; L^3(\Omega^{h-\ell}))}}{|r|^{\sigma }}\Big)^2\left(\int_{\mathbb{R}^d} |\rho|^{\sigma }  |\nabla G(\rho)| d\rho\right)^2\\
  &\le C  \nu^{1+2\beta(\sigma-1)} \Big(\sup_{|r|<\ell}\frac{\| \delta u(r;\cdot) \|_{L^3(0, t; L^3(\Omega^{h-\ell}))}}{|r|^{\sigma}}\Big)^2.
 \end{align*}
 Since $1+ 2\beta(\sigma-1)\geq 0$ by our assumption on $\beta$, we deduce from the assumption \eqref{bndassum1NS} that
 \begin{align}\label{Dvan}
 \lim_{\nu\to 0}  \int_0^t  \int_{\Omega} D_{\ell,h}[u^\nu(s)]dx ds\Bigg|_{\ell,h\sim\nu^\beta} =0.
\end{align}
 }

\subsection{Estimate for the bulk energy flux $\Pi_{\ell, h}$}   

Following the argument in \S \ref{bulkFluxEstimates}, \ref{resolvDiss}, we have estimate
\red{
$$
\int_0^t \| \Pi_{\ell, h}[u^\nu]\|_{L^1(\Omega)}ds\leq C\ell^{3\sigma-1}\Big(\sup_{|r|<\ell}\frac{\| \delta u(r;x) \|_{L^3(0, t; L^3(\Omega^{h-\ell}))}}{|r|^{\sigma}}\Big)^3.
$$
 Since $\sigma\geq 1/3$, the uniform interior Besov regularity assumption \eqref{bndassum1NS} combined with the above yields
\begin{align}\label{Pivan}
 \lim_{\nu\to 0}  \int_0^t  \int_{\Omega} \Pi_{\ell,h}[u^\nu(s)]dx ds\Bigg|_{\ell,h\sim\nu^\beta} =0.
\end{align}
}

\subsection{Estimate for the inertial boundary production $B_{\ell, h}$}
Following the argument in \S \ref{boundaryEstEuler} and using uniform continuity and boundedness \eqref{bndassum2NS}, \eqref{bndassum3NS}, we obtain an identical bound  \eqref{Bbd4} with $\Omega_\ve$ replaced by $\Omega_{\nu^\beta}$.   Since our continuity-at-the-wall assumption \eqref{bndassum4NS} is stronger than the corresponding Euler assumption \eqref{bndassum3} which is for the normal component alone, these upper bounds vanish as $\nu\to 0$.  We must finally treat the additional term arising due to viscosity.  We proceed as follows
\be\label{B:nuterm}
\begin{aligned} 
\nu\nabla \theta_{h,\ell}\cdot \nabla \ol{(u^\nu)}_\ell \cdot \ol{(u^\nu)}_\ell &=-\nu \eta_{\ell,h}'(d(x)) \hat{n}(\pi(x))\cdot \nabla \ol{(u^\nu)}_\ell\cdot \ol{(u^\nu)}_\ell\\
&=\frac{\nu}{\ell} \eta_{\ell,h}'(d(x)) \int_\Omega (\nabla G)_\ell(r)\cdot \hat{n}(\pi(x))  \ u^\nu(x+r) \cdot \ol{(u^\nu)}_\ell(x) dr.
\end{aligned}
\ee
We remark the assumption of wall-continuity \eqref{bndassum4NS} for Navier-Stokes, instead of the weaker condition of wall-normal continuity \eqref{bndassum3} which we had for Euler, seems necessary to control this term as $\nu\to 0$.
Now, using the fact that $\|\eta_{\ell,h}'(d(x)) \|_{L^\infty(\Omega)} \leq C \ell^{-1}$ and $|\Omega_h\setminus \Omega_{h-\ell}|\leq C_\Omega \ell$, we have
\begin{align} 
\int_0^t\int_{\Omega_h\setminus \Omega_{h-\ell}} |\nu\nabla \theta_{h,\ell}\cdot \nabla \ol{(u^\nu)}_\ell \cdot \ol{(u^\nu)}_\ell|dx dt &\lesssim \frac{\nu}{\ell} \|u^\nu\|_{L^2(0,T;L^\infty(\Omega_{\nu^\beta}))} \|\sup_{x\in \Omega_{\nu^\beta}} |u^\nu(x,\cdot)|\|_{L^2((0,T))}
\end{align}
where we used the fact that $x+r\in \Omega_{\nu^\beta}$ for all $x\in \Omega_h$ and $r\in B_\ell(0)$.
Using the assumption \eqref{bndassum4NS},  together with the fact that $\nu/\ell = O(1)$ by the choice \eqref{scales},
this implies that
\begin{align}\label{Bvan}
 \lim_{\nu\to 0}  \int_0^t  \int_{\Omega} B_{\ell,h}^\nu[u^\nu(s)]dx ds\Bigg|_{\ell,h\sim\nu^\beta} =0.
\end{align}

\subsection{Vanishing viscous dissipation}\label{secEnCons}
Combining \eqref{Dvan},\eqref{Pivan}, and \eqref{Bvan}, we have 
\be\label{resolvedEConv}
\lim_{\nu\to 0} \left| \frac{1}{2} \int_{\Omega}\theta_{\ell,h}(x) |\ol{(u^\nu)}_\ell(x,0)|^2 dx-  \frac{1}{2} \int_{\Omega} \theta_{\ell,h}(x) |\ol{(u^\nu)}_\ell(x,t)|^2 dx \right|\Bigg|_{\ell,h\sim\nu^\beta}=0
\ee
 for all $t\in [0,T]$. Now we note that
 \be
 \int_{\Omega}\theta_{h,\ell} \ol{(|u^\nu(x,t)|^2)}_\ell dx  \leq        \int_{\Omega}\int_\Omega G_\ell(r) \theta_{h,\ell}(x) |u^\nu(x+r,t)|^2 drdx    \leq      \int_{\Omega} |u^\nu(x,t)|^2 dx.
    \ee
 This combined with the fact that energy of $u^\nu$ is non-increasing yields
\begin{align*} \nonumber
 0&\le \int_{\Omega} |u_0^\nu(x)|^2 dx-\int_{\Omega}|u^\nu(x, t)|^2 dx  \\
 &\leq  \int_{\Omega} |u_0^\nu(x)|^2 dx-\int_{\Omega}\theta_{h,\ell} \ol{(|u^\nu(x,t)|^2)}_\ell dx  \\
 &=\int_{\Omega} |u_0^\nu(x)|^2 dx-\int_{\Omega} \theta_{\ell,h}(x) |\ol{(u^\nu)}_\ell(x,t)|^2 dx-\int_{\Omega}\theta_{h,\ell} \tau(u^\nu(t), u^\nu(t))dx\\
 &\le \int_{\Omega} |u_0^\nu(x)|^2 dx-\int_{\Omega} \theta_{\ell,h}(x) |\ol{(u^\nu)}_\ell(x,t)|^2 dx\\
 &= \int_{\Omega} |u_0^\nu(x)|^2 dx-\int_{\Omega} \theta_{\ell,h}(x) |\ol{(u_0^\nu)}_\ell(x)|^2 dx+\int_{\Omega} \theta_{\ell,h}(x) |\ol{(u_0^\nu)}_\ell(x)|^2 dx-\int_{\Omega} \theta_{\ell,h}(x) |\ol{(u^\nu)}_\ell(x,t)|^2 dx
\end{align*}
where in the fourth line we used Jensen's inequality  to have $ \tau_\ell(f,f)\geq 0$ pointwise. Let $\nu_n$ be an {\it arbitrary} sequence of viscosity converging to $0$. Choose $\ell=\ell_n:=\ell(\nu_n)$ and $h=h_n:=h(\nu_n)$ defined by \eqref{scales}. By \eqref{resolvedEConv} the second difference on the right hand side vanishes in the limit $ \nu_n\to 0$. We claim that 
\be\label{claim:conv}
\lim_{n\to \infty} \Big[\int_{\Omega} |u_0^{\nu_n}(x)|^2 dx-\int_{\Omega} \theta_{\ell,h}(x) |\ol{(u_0^{\nu_n})}_\ell(x)|^2 dx\Big]=0.
\ee
With this granted, 
\be
\lim_{n\to \infty}  \Big[\int_{\Omega}|u^{\nu_n}(x,t)|^2 dx-\int_{\Omega} |u_0^{\nu_n}(x)|^2 dx\Big]=0,
\ee
and thus from the energy inequality for Leray-Hopf weak solutions, the viscous dissipation vanishes in the limit $\nu_n\to 0$, i.e.
\be
\lim_{n\to \infty}  {\nu_n}\int_0^t\int_\Omega |\nabla u^{\nu_n}(x, s)|dxds=0.
\ee
Since the sequence $\nu_n$ is arbitrary, we conclude no anomalous dissipation \eqref{noanomaly}.

It remains to prove the claim \eqref{claim:conv} which, by the assumption $u^\nu_0\to u_0$ in $L^2(\Omega)$, is equivalent to 
\be\label{claim:conv2}
\lim_{n\to \infty} \Big[\int_{\Omega} |u_0(x)|^2 dx-\int_{\Omega} \theta_{\ell,h}(x) |\ol{(u_0^{\nu_n})}_\ell(x)|^2 dx\Big]=0.
\ee
To this end, we write 
\begin{align} \nonumber
  \ol{(u_0^{\nu_n})}_\ell(x)-u_0(x)&=\int_\Omega G_\ell(r)[u_0^{\nu_n}(x+r)-u_0(x)]dr\\
  &=\int_\Omega G_\ell(r)\Big([u_0^{\nu_n}(x+r)-u_0(x+r)]+[u_0(x+r)-u_0(x)]\Big)dr.
  \end{align}
  Then, for any $U\Subset V\Subset \Omega$, we have
  \begin{align}\nonumber
  \|  \ol{(u_0^{\nu_n})}_\ell-u_0\|_{L^2(U)} &\le \int_\Omega G_\ell(r)\| u^{\nu_n}_0(\cdot+r)-u_0(\cdot+r)\|_{L^2(U)}dr+\int_\Omega G_\ell(r)\| u_0(\cdot+r)-u_0(\cdot)\|_{L^2(U)}dr\\
  &\le \| u_0^{\nu_n}-u_0\|_{L^2(V)}+\sup_{|r|<\ell}\| u_0(\cdot+r)-u_0(\cdot)\|_{L^2(U)}
  \end{align}
  where we used the fact that $U-\{r\}\subset V$ for $|r|<\ell\sim \nu_n^\beta$ sufficiently small. This implies that 
\be\label{converge:L2}
\lim_{n\to \infty} \|  \ol{(u_0^{\nu_n})}_\ell-u_0\|_{L^2(U)} =0.
\ee
%Then, by Theorem 4.9 in \cite{Brezis} there exist  $f\in L^2(U)$ and a subsequence of $\nu\to 0$, both depending on $U$, such that $\ol{(u_0^\nu)}_\ell\to u_0(x)$ and $|\ol{(u_0^\nu)}_\ell(x)|\le f(x)$ for a.e. $x\in U$. In addition,  $\theta_{\ell, h}(x)\to 1$ as $\nu\to 0$ for all $x\in U$. We thus obtain by dominated convergence theorem that
Consequently
\be\label{conv:L2:2}
\lim_{n\to \infty}\int_U\theta_{\ell,h}(x) |\ol{(u_0^{\nu_n})}_\ell(x)|^2 dx= \int_{U}|u_0(x)|^2 dx.
\ee
 For $\delta$ sufficiently small and $\ell<\delta$ we have 
\be
\int_{\Omega_\delta}\theta_{\ell,h}(x) |\ol{(u_0^{\nu_n})}_\ell(x)|^2 dx\le \int_{\Omega_{2\delta}} |u_0^{\nu_n}(x)|^2dx,
\ee
hence, from our assumption that $u_0^\nu\to u_0$ strongly in $L^2(\Omega)$,
\be\label{delta1}
\lim_{\delta\to 0}\lim_{n\to \infty}\int_{\Omega_\delta}\theta_{\ell,h}(x) |\ol{(u_0^{\nu_n})}_\ell(x)|^2 dx\le \lim_{\delta\to 0}\int_{\Omega_{2\delta}} |u_0(x)|^2dx=0.
\ee
On the other hand, it follows from \eqref{conv:L2:2} with $U=\Omega^\delta$ that
\be\label{delta2}
\lim_{\delta\to 0}\lim_{n\to \infty}\int_{\Omega^\delta}\theta_{\ell,h}(x) |\ol{(u_0^{\nu_n})}_\ell(x)|^2 dx= \int_{\Omega}|u_0(x)|^2 dx.
\ee
 Combining \eqref{delta1} and \eqref{delta2}, we have proved that 
\be
\lim_{n\to \infty}\int_{\Omega}\theta_{\ell,h}(x) |\ol{(u_0^{\nu_n})}_\ell(x)|^2 dx=\int_\Omega |u_0(x)|^2dx.
\ee
This concludes the proof of \eqref{claim:conv2}.
\subsection{Convergence to weak Euler}\label{SecWeakE}
By assumptions \eqref{bndassum1NS} and \eqref{bndassum1NS}, $u^\nu$ is uniformly in $\nu$ bounded in $L^3(\Omega\times (0, T))$. Thus, we have weak convergence
  \be\label{conv:wL3}
  u^{\nu_k}\rightharpoonup u\quad\text{in} ~L^3(\Omega\times (0, T))
  \ee
for some $u$ and some subsequence of $\nu_n\to 0$. Fix two arbitrary open sets $U$ and $V$ with $U\Subset V\Subset \Omega$. By means of Aubin-Lions's lemma one can prove the following 
  \begin{lemma}\label{lemm:inviscid}
  If  $u^{\nu_k}$ are uniformly in $k$ bounded in $L^3(0, T; B^{s, \infty}_3(V))$ with $s>0$ and $\nabla p^{\nu_k}$ are uniformly in $k$ bounded in $L^{3/2}(0, T; W^{-1, 3/2}(V)$ then there exist a subsequence $u^{\nu_{k_j}}$ and a vector field $u^\infty$, both depending on $U$ a priori, such that $u^{\nu_{k_j}}\to u^\infty$ in $L^3(U\times (0, T))$. Moreover, $u^\infty$ is a weak Euler solution on $U\times (0, T)$.
  \end{lemma}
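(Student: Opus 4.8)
The plan is to establish strong $L^3(U\times(0,T))$ compactness of a subsequence of $\{u^{\nu_k}\}$ via the Aubin--Lions--Simon lemma, and then to pass to the limit in the weak formulation \eqref{weakNS2}. Fix open sets $U\Subset V\Subset\Omega$ and a cutoff $\chi\in C_c^\infty(V)$ with $\chi\equiv 1$ on $U$. The functions $\chi u^{\nu_k}$ are supported in a fixed compact subset of $V$, and since $s>0$ the hypothesis that $\{u^{\nu_k}\}$ is bounded in $L^3(0,T;B_3^{s,\infty}(V))$ makes $\{\chi u^{\nu_k}\}$ bounded in $L^3(0,T;B_3^{s,\infty}(\mathbb{R}^d))$; restricted to functions with support in a fixed compact set, $B_3^{s,\infty}$ embeds compactly into $L^3$ (a Rellich-type theorem, e.g.\ via $B_3^{s,\infty}\hookrightarrow B_3^{s'',1}\hookrightarrow\hookrightarrow L^3$ for $0<s''<s$). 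This supplies the compact spatial embedding needed for Aubin--Lions.

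Next I would obtain uniform time regularity for $\chi u^{\nu_k}$. From \eqref{NSeq}, with the pressure reintroduced by Theorem \ref{reintro:p}, one has $\partial_t(\chi u^{\nu_k})=-\chi\,\nabla\cdot(u^{\nu_k}\otimes u^{\nu_k})-\chi\,\nabla p^{\nu_k}+\nu_k\,\chi\,\Delta u^{\nu_k}$. Since $B_3^{s,\infty}(V)\hookrightarrow L^3(V)$, the set $\{u^{\nu_k}\}$ is bounded in $L^3(V\times(0,T))$, so $u^{\nu_k}\otimes u^{\nu_k}$ is bounded in $L^{3/2}(V\times(0,T))$ and, using $\chi\nabla\cdot F=\nabla\cdot(\chi F)-\nabla\chi\cdot F$, the first term is bounded in $L^{3/2}(0,T;W^{-1,3/2}(\mathbb{R}^d))$. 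The second term is bounded in $L^{3/2}(0,T;W^{-1,3/2}(\mathbb{R}^d))$ directly by the hypothesis on $\nabla p^{\nu_k}$ (multiplication by $\chi\in C_c^\infty$ is bounded on $W^{-1,3/2}$). The Besov bound gives $\Delta u^{\nu_k}$ bounded in $L^3(0,T;B_3^{s-2,\infty}(V))$, whence $\nu_k\chi\Delta u^{\nu_k}\to 0$, in particular is bounded, in $L^{3/2}(0,T;W^{-M,3/2}(\mathbb{R}^d))$ for $M$ large. Thus $\{\partial_t(\chi u^{\nu_k})\}$ is bounded in $L^{3/2}(0,T;W^{-M,3/2}(\mathbb{R}^d))$, into which $L^3$ embeds continuously on the fixed compact support. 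Applying Aubin--Lions--Simon with $X$ the space of $B_3^{s,\infty}$ functions supported in the support of $\chi$, intermediate space $B=L^3$, and $Y=W^{-M,3/2}$, one extracts a subsequence $u^{\nu_{k_j}}$ with $\chi u^{\nu_{k_j}}\to w$ strongly in $L^3(0,T;L^3(\mathbb{R}^d))$; since $\chi\equiv 1$ on $U$, $u^{\nu_{k_j}}\to u^\infty$ strongly in $L^3(U\times(0,T))$ for some $u^\infty$.

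It then remains to identify $u^\infty$ as a weak Euler solution on $U\times(0,T)$. From $\nabla\cdot u^{\nu_k}=0$ and the weak $L^3$ limit, $\nabla\cdot u^\infty=0$. For divergence-free $\varphi\in C_c^\infty(U\times(0,T))$ I would pass to the limit in \eqref{weakNS2}: the velocity-linear term by weak $L^3$ convergence, the quadratic term because strong $L^3$ convergence gives $u^{\nu_{k_j}}\otimes u^{\nu_{k_j}}\to u^\infty\otimes u^\infty$ in $L^{3/2}$, and the viscous term because $\nu_k\to 0$ while $\{u^{\nu_k}\}$ is bounded in $L^3$. This yields the Euler weak formulation against divergence-free test fields on $U$; by the de~Rham theorem there is a distribution $p^\infty$ for which \eqref{weakEuler} (with $U$ in place of $\Omega$) holds for all test fields, and elliptic regularity for $-\Delta p^\infty=\nabla\otimes\nabla:(u^\infty\otimes u^\infty)$ with right-hand side in $W^{-2,3/2}_{loc}$ gives $p^\infty\in L^{3/2}_{loc}\subset L^1_{loc}$. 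Finally, the uniform Leray--Hopf bound of $\{u^{\nu_k}\}$ in $L^\infty(0,T;L^2(\Omega))$ passes to $u^\infty$, which together with $\partial_t u^\infty\in L^{3/2}(0,T;W^{-M,3/2}_{loc}(U))$ gives $u^\infty\in C_w(0,T;L^2_{loc}(U))$, completing the verification.

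The main obstacle is the localization bookkeeping: one must verify carefully that the interior Besov bound and each piece of $\partial_t(\chi u^{\nu_k})$ genuinely transfer to compactly supported functions on $\mathbb{R}^d$, so that a clean compact embedding $B_3^{s,\infty}$--to--$L^3$ (for functions with fixed compact support) and a clean Aubin--Lions--Simon setup are available; once this is in place, the passage to the limit and the recovery of the limiting pressure are routine.
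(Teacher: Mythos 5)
Your proposal is correct and follows essentially the same route as the paper: an Aubin--Lions compactness argument built on the compact interior embedding $B_3^{s,\infty}\hookrightarrow\hookrightarrow L^3$ together with a uniform bound on $\partial_t u^{\nu_k}$ in $L^{3/2}(0,T;W^{-m,3/2})$ read off term by term from the equation, followed by passage to the limit in the weak formulation. The only notable (and harmless) difference is bookkeeping: you localize with a cutoff and control the viscous term $\nu_k\Delta u^{\nu_k}$ via the interior Besov bound, whereas the paper restricts from $V$ to $U$ directly and bounds $\sqrt{\nu_k}\,\Delta u^{\nu_k}$ in $L^2(0,T;H^{-1}(V))$ using the Leray--Hopf energy inequality.
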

 \begin{proof} From the assumptions we have that $\partial_t u^{\nu_k}$ are uniformly in $k$ bounded in $L^{3/2}(0, T; W^{-1, 3/2}(V))$. This is because $\sqrt {\nu_k}\Delta u^{\nu_k}$ are uniformly in $k$ bounded in $L^2(0, T; H^{-1}(V))\subset L^{3/2}(0, T; W^{-1, 3/2}(V))$ which follows from energy inequality of Leray-Hopf solutions. Note that  $B^{s, \infty}_3(V)\subset L^3(U)\subset W^{-1, 3/2}(U)$ with the first embedding being compact and the second one being continuous. By virtue of Aubin-Lion's lemma there exist $u^\infty\in L^3(0, T; L^3(U))$ and a subsequence $\nu_{k_j}$ such that 
\[
u^{\nu_{k_j}}\to u^\infty\quad\text{in}~L^3(0, T; L^3(U)).
\]
We can now pass to limit $j\to \infty$ in the weak formulation of Navier-Stokes equations and obtain that $u^\infty$ is a weak Euler solution on $U\times (0, T)$.
\end{proof}

  Take $\nu_k$ sufficiently small such that $V\subset \Omega^{\nu_k^{\beta}/2}$.
  Note that from assumption \eqref{bndassum1NS}, we have that $u^\nu$ is uniformly in $\nu$ bounded in $L^3(0, T; B^{\sigma, \infty}_3(V))$ with $\sigma>0$. By virtue of Theorem \ref{reintro:p} and the trace theorem, the pressure trace $p^\nu\vert_{\partial \Omega}$ is well defined in $L^{s_0}(\partial\Omega)$ for a.e. $t\in [0,T]$.  It then follows from the  uniform boundedness assumption \eqref{bndassum3NS} that $p^\nu\vert_{\partial\Omega} \in L^\infty(\partial\Omega)$. In other words, $p^\nu$ satisfies the Dirichlet problem
   \begin{align}
\qquad -\Delta p^\nu &= \nabla \otimes \nabla : (u^\nu\otimes u^\nu), \qquad \text{in} \ \Omega,\\
p^\nu &\in L^\infty(\partial\Omega), \qquad \quad \quad \qquad \text{on} \ \partial\Omega.
\end{align}
   We deduce by the method of transposition (see Remark \ref{rema:pressure}) that $p^\nu\in L^{3/2}(0, T; L^{3/2}(\Omega))$ uniformly in $\nu$. Thus,  $\nabla p^{\nu_k}$ are uniformly in $k$ bounded in $L^{3/2}(0, T; W^{-1, 3/2}(V)$.  Then, by Lemma \ref{lemm:inviscid}, every subsequence of $u^{\nu_{k}}$ has a subsequence converging to some $u^\infty$ in $L^3(U\times (0, T))$, and  $u^\infty$ is a weak Euler solution on  $U\times (0, T)$. In view of \eqref{conv:wL3}, necessarily $u^\infty\equiv u$ on $U\times (0, T)$. Therefore, the whole sequence $u^{\nu_k}$ converges to  $u$ in $L^3(U\times (0, T))$. Moreover, since $U$ is arbitrary we deduce that $u$ is  a weak Euler solution on  $\Omega\times (0, T)$.

\section{Proof of Theorem \ref{thm:onsagerNS'}}

We will follow the proof of Theorem \ref{thm:onsagerNS}. Under assumptions \eqref{bndassum1NS'}, \eqref{bndassum2NS'} and \eqref{bndassum3NS'}, estimates for the resolved dissipation $D_{\ell, h}^\nu$ and the bulk energy flux $\Pi_{\ell, h}$ are identical to those in the proof of Theorem \ref{thm:onsagerNS}. Thus, it remains to treat the inertial boundary production 
\[
 B^\nu_{\ell, h}[u^\nu]=\nabla \theta_{\ell,h} \cdot J_{\ell, h}[u^\nu],\qquad J_{\ell, h}^\nu[u^\nu] =  \left(\frac{1}{2}|\ol{(u^\nu)}_\ell|^2 +\ol{(p^\nu)}_\ell \right) \ol{(u^\nu)}_\ell+ \ol{(u^\nu)}_\ell \cdot \tau_\ell(u^\nu,u^\nu) - \nu \nabla \ol{(u^\nu)}_\ell\cdot \ol{(u^\nu)}_\ell.
\]
First, we have
\begin{align*}
\int_0^t \int_\Omega |\nu\nabla \theta_{h,\ell}\cdot \nabla \ol{(u^\nu)}_\ell \cdot \ol{(u^\nu)}_\ell |dxds&\lesssim \frac{\nu}{\ell} \int_0^T\int_{\Omega_h\setminus\Omega_{h-\ell}}|\ol{(\nabla u^\nu)}_\ell \cdot \ol{(u^\nu)}_\ell |dxds\\
&\lesssim \frac{\nu}{\ell} \| \nabla u^\nu\|_{L^2(0, T; L^2(\Omega_{\nu^\beta}))}\| u^\nu\|_{L^2(0, T; L^2(\Omega_{\nu^\beta})}\\
&\lesssim \nu \| \nabla u^\nu\|_{L^2(0, T; L^2(\Omega_{\nu^\beta}))}^2=o_\nu(1)
\end{align*}
where  in the last line  we used the choice \eqref{scales} and assumption \eqref{cd:vanish}, and Poincar\'e's inequality with $u|_{\partial\Omega}=0$ to conclude
\be\label{Poincare:strip}
\| u^\nu\|_{L^2(0, T; L^2(\Omega_{\nu^\beta})}\le C\nu^\beta\| \nabla u^\nu\|_{L^2(0, T; L^2(\Omega_{\nu^\beta}))}.
\ee
Next, we consider the term
\begin{align*}
\int_0^t\int_\Omega|\nabla \theta_{\ell,h} \cdot \big(|\ol{(u^\nu)}_\ell|^2 \ol{(u^\nu)}_\ell\big)|dxds&\leq \int_0^t\int_{\Omega_h\setminus\Omega_{h-\ell}}|\eta_{\ell,h}'(d(x))| |\ol{(u^\nu)}_\ell|^2 |\hat n(\pi(x))\cdot \ol{(u^\nu)}_\ell|dxds\\
&\lesssim \frac{1}{\ell}\|u^\nu\|_{L^\infty(0, T; L^\infty(\Omega_{\nu^\beta}))}\Vert {u^\nu}\|^2_{L^2(0, T; L^2(\Omega_{\nu^\beta}))}\\
&\lesssim \frac{\nu^{2\beta}}{\ell}\|{u^\nu}\|_{L^\infty(0, T; L^\infty(\Omega_{\nu^\beta}))}\Vert \nabla u^\nu\|^2_{L^2(0, T; L^2(\Omega_{\nu^\beta}))}\\
&\lesssim \nu^\beta \Vert \nabla u^\nu\|^2_{L^2(0, T; L^2(\Omega_{\nu^\beta}))}=o_\nu(1)
\end{align*}
where we used again Poincar\'e's inequality \eqref{Poincare:strip} and assumption \eqref{cd:vanish}.  Since 
$$
\|\tau_\ell(u^\nu,u^\nu)\|_{L^1(0, T; L^1(\Omega_{h}))}\lesssim  \Vert  u^\nu\|^2_{L^2(0, T; L^2(\Omega_{\nu^\beta}))},
$$
we have by the same argument that
\begin{align*}
\int_0^t\int_\Omega|\nabla \theta_{\ell,h} \cdot \big( \ol{(u^\nu)}_\ell\cdot \tau_\ell(u,u)\big)|dxds
&\lesssim \nu^\beta \Vert \nabla u^\nu\|^2_{L^2(0, T; L^2(\Omega_{\nu^\beta}))}=o_\nu(1).
\end{align*}
Finally, we estimate the contribution to the boundary production due to the pressure
\begin{align*}
\int_0^t\int_\Omega|\nabla \theta_{\ell,h} \cdot \big(\ol{(p^\nu)}_\ell \ol{(u^\nu)}_\ell \big)|dxds
&\leq \int_0^t\int_{\Omega_h\setminus\Omega_{h-\ell}}|\eta_{\ell,h}'(d(x))| |\ol{(u^\nu)}_\ell| |\ol{(p^\nu)}_\ell|dxds\\
&\lesssim \frac{1}{\ell}\|p^\nu\|_{L^2(0, T; L^\infty(\Omega_{\nu^\beta}))}\Vert  \ol{(u^\nu)}_\ell \|_{L^2(0, T; L^1(\Omega_h\setminus\Omega_{h-\ell}))}\\
&\lesssim \frac{1}{\sqrt{\ell}}\|p^\nu\|_{L^2(0, T; L^\infty(\Omega_{\nu^\beta}))}\Vert {u^\nu}\|_{L^2(0, T; L^2(\Omega_{\nu^\beta}))}\\
&\lesssim \|p^\nu\|_{L^2(0, T; L^2(\Omega_{\nu^\beta}))}\nu^{\beta/2}\Vert \nabla {u^\nu}\|_{L^2(0, T; L^2(\Omega_{\nu^\beta}))}\\
&\le \delta\|p^\nu\|^2_{L^2(0, T; L^2(\Omega_{\nu^\beta}))}+\frac{C}{\delta}\nu^\beta\Vert \nabla {u^\nu}\|_{L^2(0, T; L^2(\Omega_{\nu^\beta}))}^2
\end{align*}
for any $\delta>0$.

Letting $\nu\to 0$ and then $\delta\to 0$  we obtain 
\[
 \lim_{\nu\to 0}  \int_0^t  \int_{\Omega} B_{\ell,h}^\nu[u^\nu(s)]dx ds\Bigg|_{\ell,h\sim\nu^\beta} =0,\quad t\in [0, T].
\]
The proof of vanishing of global viscous dissipation and convergence to weak Euler solution in Sections \ref{secEnCons} and \ref{SecWeakE} carries over.

\section{Proof of Theorem \ref{thm:inviscid}}\label{thm4pf}

Our proof employs the relative energy method  (see \cite{W17} for a nice review) and is given, for simplicity, in two-dimensions\footnote{By inspection of the proof, clearly the statement holds also in all dimensions $d\geq2$ by modifying the approximating sequence  $v^h$ defined in $2d$ by Eqn. \eqref{vh}. For example, in three dimensions, one takes the approximating sequence to instead be  $v^h(x,t) = \nabla \times (\theta_h(x) \psi(x,t))$ where $u$ is the strong Euler solution and $\psi$ is any vector potential satisfying $u=\nabla \times \psi$ and $\hat{n}\cdot\nabla \times \psi=0$.  Dimension $d>3$ are treated similarly.}.  
First since $v$ is divergence free, $C^1$ and satisfies $\hat{n}\cdot u=0$, then by Poincar\'{e}'s lemma there exists a stream function $\psi\in C([0,T];C^2(\ol{\Omega}))\cap C^1(\ol{\Omega}\times[0,T])$ with $\psi=0$ on $\partial\Omega$ and such that $v=\nabla^\perp\psi$.  Define an approximating sequence
\be\label{vh}
v^h(x,t)=\nabla^\perp[\theta_h(x) \psi(x,t)], \qquad  \theta_h(x)= \eta\left(\frac{d(x)}{h}\right)
\ee
where  $\eta:\mathbb{R}\to [0,1]$ is a smooth non-decreasing function which is $0$ on $y\in(-\infty,1/2]$ and $1$ on $y\in [1,\infty)$.  Note that ${\rm supp}(\theta)\subset \Omega\setminus\Omega_{h/2}$.  The functions $v^h$ are $C^1(\Omega)$, compactly supported and divergence free by construction.  Therefore, they qualify as test functions in the weak formulation of Navier-Stokes.  Let $u^\nu$ be any Leray solution of the Navier-Stokes equations  with initial data $u_0^\nu$ and consider the relative energy. 
\begin{align} \nonumber
\frac{1}{2} \|u^\nu(t) - v^h(t)\|_{L^2(\Omega)}^2&= \frac{1}{2} \|u^\nu(t)\|_{L^2(\Omega)}^2+\frac{1}{2} \|v^h(t)\|_{L^2(\Omega)}^2 - \int_\Omega u^\nu (t) \cdot v^h(t)dx \\ \nonumber
&=\frac{1}{2} \|u^\nu(t)\|_{L^2(\Omega)}^2+\frac{1}{2} \|v^h(t)\|_{L^2(\Omega)}^2 -   \int_\Omega u_0^\nu  \cdot v^h(0) dx \\
&\qquad - \int_0^t \int_\Omega \Big(\partial_t  v^h\cdot u^\nu + \nabla v^h:(u^\nu \otimes u^\nu )   + \nu \Delta v^h \cdot  u^\nu \Big) dxds 
\end{align}
a.e. $t\in [0,T]$, where we used the weak formulation of Navier-Stokes in passing to the second equality.  By the Leray energy inequality, we have $\frac{1}{2} \|u^\nu(t)\|_{L^2(\Omega)}^2\leq \frac{1}{2} \|u_0^\nu\|_{L^2(\Omega)}^2$ so that 
\begin{align} \nonumber
\frac{1}{2} \|u^\nu(t) - v^h(t)\|_{L^2(\Omega)}^2
&\leq \frac{1}{2} \|v^h(t)\|_{L^2(\Omega)}^2 + \frac{1}{2} \|u_0^\nu\|_{L^2(\Omega)}^2 -   \int_\Omega u_0^\nu  \cdot v^h(0) dx  \\
&\qquad -\int_0^t \int_\Omega \Big(\partial_t v^h\cdot u^\nu + \nabla v^h:(u^\nu \otimes u^\nu )   + \nu \Delta v^h \cdot  u^\nu \Big) dx ds.
\end{align}
Now, using the fact \eqref{distDeriv}, we have that $\nabla^\perp d(x) = -\hat{\tau}(\pi(x))$ for all $x\in \Omega_{h(\Omega)}$  so
\be\label{vhDec}
v^h(x,t) = \theta_h(x) v(x,t) -\eta'\left(\frac{d(x)}{h}\right)   \frac{ \hat{\tau}(\pi(x))}{h} \psi(x,t)
\ee
where $\hat\tau$ is the unit tangent vector field to the boundary.  From \eqref{vhDec} and the fact that $v$ is a strong Euler solution, we have
\begin{align} \nonumber
\partial_t v^h  &= \theta_h \partial_t v - \eta'\left(\frac{d(x)}{h}\right)   \frac{ \hat{\tau}(\pi(x))}{h}\partial_t\psi\\ \nonumber
&=- \theta_h \nabla\cdot(v\otimes v) -\theta_h  \nabla  p  - \eta'\left(\frac{d(x)}{h}\right)   \frac{ \hat{\tau}(\pi(x))}{h}\partial_t\psi\\
&=- \theta_h \nabla\cdot(v\otimes v) -  \nabla(\theta_h  p)  - \frac{1}{h}  \eta'\left(\frac{d(x)}{h}\right)\left( \hat{\tau}(\pi(x))\partial_t\psi + \hat{n}(\pi(x)) p \right).
\end{align}
The relative energy inequality becomes
\begin{align}\nonumber
\frac{1}{2} \|u^\nu(t) - v^h(t)\|_{L^2(\Omega)}^2
&\leq \frac{1}{2} \|v^h(t)\|_{L^2(\Omega)}^2 + \frac{1}{2} \|u_0^\nu\|_{L^2(\Omega)}^2 -   \int_\Omega u_0^\nu  \cdot v^h(0) dx  \\ \nonumber
&\quad + \int_0^t\int_\Omega \theta_h \Big(  \nabla\cdot(v\otimes v) \cdot u^\nu  - \nabla v:(u^\nu \otimes u^\nu ) \Big) dxds \\ \nonumber
&\quad  +  \int_0^t\int_{\Omega_h\setminus \Omega_{h/2}} \frac{1}{h}  \eta'\left(\frac{d(x)}{h}\right)\Big(\left( \hat{\tau}(\pi(x))\partial_t\psi + \hat{n}(\pi(x)) (p+u^\nu\otimes u^\nu \right) \Big)\cdot u^\nu dxds \\
&\quad   -\nu  \int_0^t\int_\Omega  \Delta v^h \cdot  u^\nu dx ds. \label{relenIneq1}
\end{align}
We first rewrite the second term in the inequality \eqref{relenIneq1}.  Using the divergence free properties $\nabla \cdot u^\nu= \nabla \cdot v=0$, we have after minor manipulation that
\begin{align}\nonumber
 \nabla\cdot(v\otimes v) \cdot u^\nu  - \nabla v:(u^\nu \otimes u^\nu )&= v\cdot \nabla v \cdot u^\nu  -u^\nu\cdot  \nabla v\cdot u^\nu = (v-u^\nu)\cdot \nabla v \cdot u^\nu \\ \nonumber
  %&= (v-u^\nu)\cdot \nabla v \cdot (u^\nu-v) +  (v-u^\nu)\cdot \nabla v \cdot v\\
  &=(v-u^\nu)\cdot \nabla v \cdot (u^\nu-v) + \nabla\cdot\left(  (v-u^\nu)\frac{1}{2} |v|^2\right).
\end{align}
Upon integrating in space against $\theta_h$, this yields
\begin{align}\nonumber
\int_\Omega \theta_h \Big(  \nabla\cdot(v\otimes v) \cdot u^\nu  - & \nabla v:(u^\nu \otimes u^\nu ) \Big) dx= \int_\Omega \theta_h \Big((u^\nu - v )\cdot \nabla v \cdot  (v- u^\nu) \Big) dx\\
& \qquad +    \frac{1}{h} \int_{\Omega_h\setminus \Omega_{h/2}}   \eta'\left(\frac{d(x)}{h}\right) \hat{n}(\pi(x)) \cdot  (v(x)-u^\nu(x)) \frac{1}{2} |v(x)|^2 dx. \label{secondintterm}
\end{align}
Furthermore, since $\|\theta_h\|_{L^\infty(\Omega)}=1$, we have the bound
\be
\left|\int_0^t \int_\Omega \theta_h \Big((u^\nu - v )\cdot \nabla v \cdot  (v- u^\nu) \Big) dx ds\right|\leq  \int_0^t \|\nabla v\|_{L^\infty(\Omega)} \|u^\nu(s) - v(s)\|_{L^2(\Omega)}^2 ds.
\ee
Thus, we have the following integral inequality for a.e. $t\in [0,T]$
\be \label{relEbnd}
\frac{1}{2} \|u^\nu(t) - v(t)\|_{L^2(\Omega)}^2 \leq \int_0^t \|\nabla v\|_{L^\infty(\Omega)} \|u^\nu(s) - v(s)\|_{L^2(\Omega)}^2 ds +E^{\nu,h}(t),
\ee
where we have defined the error terms
\be
E^{\nu,h}(t) := |E_1^{\nu,h}(t)| +|E_2^{\nu,h}(t)|+|E_3^{\nu,h}(t)| +|E_4^{\nu,h}(t)|,
\ee
with
\begin{align}\nonumber
E_1^{\nu,h}(t)&:=\frac{1}{2}\Big( \|u^\nu(t) - v(t)\|_{L^2(\Omega)}^2- \|u^\nu(t) - v^h(t)\|_{L^2(\Omega)}^2  \\
&\qquad\qquad\qquad\qquad \qquad +  \|v^h(t)\|_{L^2(\Omega)}^2 + \|u_0^\nu\|_{L^2(\Omega)}^2 -   2\int_\Omega u_0^\nu  \cdot v^h(0) dx \Big),\\
E_2^{\nu,h}(t)&:= \int_0^t\int_{\Omega_h\setminus \Omega_{h/2}} \frac{1}{h}  \eta'\left(\frac{d(x)}{h}\right)\Big(\left( \hat{\tau}(\pi(x))\partial_t\psi + \hat{n}(\pi(x)) (p+u^\nu\otimes u^\nu \right) \Big)\cdot u^\nu dxds,\\
E_3^{\nu,h}(t)&:= \frac{1}{h}  \int_0^t \int_{\Omega_h\setminus \Omega_{h/2}}   \eta'\left(\frac{d(x)}{h}\right) \hat{n}(\pi(x)) \cdot  (v(x)-u^\nu(x)) \frac{1}{2} |v(x)|^2 dxds,\\
E_4^{\nu,h}(t)&:=\nu  \int_0^t\int_\Omega  \Delta v^h \cdot  u^\nu dx ds.
\end{align}
Applying Gr\"{o}nwall's inequality to \eqref{relEbnd} gives
\be\label{GronBnd}
\frac{1}{2} \|u^\nu(t) - v(t)\|_{L^2(\Omega)}^2\leq  E^{\nu,h}(t) + \int_0^t E^{\nu,h}(s) \|\nabla v(s)\|_{L^\infty(\Omega)} \exp\left(\int_{s}^t  \|\nabla v(s')\|_{L^\infty(\Omega)}  ds'\right)ds
\ee
for a.e. $t\in [0,T]$.
We will show that $E^{\nu,h}(t)\to 0$ strongly in $L^\infty([0,T])$ as $\nu,h\to 0$ with $h=\nu$.\\

\noindent \textbf{Error term $E_1^{\nu,h}(t)$.} First note that since $\psi$ is $C([0,T];C^2(\ol{\Omega}))$ and $\psi\vert_{\partial\Omega}=0$ we have
\be\label{psieps}
|\psi(x, t)|\le C|h|\quad\forall x\in \Omega_h.
\ee
In view of the formula \eqref{vhDec}, it follows easily that
\be
v^h\to v \quad \text{strongly in }\quad  L^\infty(0,T;L^2(\Omega)).
\ee
Combined with the fact that $v$ conserves energy, we have
\be
\lim_{h,\nu\to 0} \sup_{t\in [0,T]} \left| \|v^h(t)\|_{L^2(\Omega)}^2 + \|u_0^\nu\|_{L^2(\Omega)}^2 -   2\int_\Omega u_0^\nu  \cdot v^h(0) dx\right|=0. 
\ee
 with the limit $h,\nu \to 0$ taken in any order.
For the remaining terms, the reverse triangle inequality allows us to bound
\begin{align*}\nonumber
& \sup_{t\in[0,T]}\left|\|u^\nu(t) - v(t)\|_{L^2(\Omega)}^2 -  \|u^\nu(t) - v^h(t)\|_{L^2(\Omega)}^2\right| \\
& \leq  \sup_{t\in[0,T]} \left|\|u^\nu(t) - v(t)\|_{L^2(\Omega)} -  \|u^\nu(t) - v^h(t)\|_{L^2(\Omega)}\right|\left(\|u^\nu(t) - v(t)\|_{L^2(\Omega)} +  \|u^\nu(t) - v^h(t)\|_{L^2(\Omega)}\right) \\ \nonumber
&\leq  2\max\{\|u^\nu\|_{L^\infty(0,T;L^2(\Omega))}, \|v^h\|_{L^\infty(0,T;L^2(\Omega))}, \|v\|_{L^\infty(0,T;L^2(\Omega))}\}\|v^h- v\|_{L^\infty(0,T;L^2(\Omega))}^2.
\end{align*}
Thus, strong convergence of $v^h\to v$ in $L^\infty(0,T;L^2(\Omega))$ and uniform boundedness of $u^\nu,v^h$ and $v$ in $L^\infty(0,T;L^2(\Omega))$  once again allows us to conclude that this term vanishes as $h\to 0$.  Thus, we have shown
\be
\lim_{\nu,h\to 0} \sup_{t\in[0,T]} |E_1^{\nu,h}(t)| =0
\ee
in any order of the limits $h,\nu\to 0$.\\

\noindent \textbf{Error term $E_2^{\nu,h}(t)$.} To bound this contribution, we will use the fact that $u^\nu|_{\partial\Omega}=0$ to write the expression in terms of increments (with one velocity pinned at the boundary).  In particular, since $\pi(x)\in \partial \Omega$ for all $x\in \Omega_{h(\Omega)}$, choosing $h<h(\Omega)$ we have that 
\[
|E_2^{\nu,h}(t)|\le \|\eta'\|_{L^\infty(\Omega)}  \frac{1}{h} \int_0^t\int_{\Omega_h\setminus \Omega_{h/2}} \left(|p|+|\partial_t\psi|  + |w^\nu(x)\otimes w^\nu(x)|\right)|w^\nu(x)| dxds
\]
with $w^\nu(x):=u^\nu(x)- u^\nu(\pi(x))$.  Now note that $p,\partial_t\psi\in L^\infty([0,T];L^\infty( \Omega))$.  Moreover, with the choice of $h=\nu$ and our assumption that $u^\nu \in L^3(0,T;L^\infty(\Omega_{\nu}))$ with norms uniformly bounded in $\nu$, we have
\begin{align}\nonumber
|E_2^{\nu,h}(t)| & \leq  \max\{\|p\|_{L^\infty(0, T; \Omega_\nu)}, \|\partial_t\psi\|_{L^\infty(0, T; \Omega_\nu)}\} \frac{C}{\nu}\int_0^t  \int_{\Omega_\nu\setminus \Omega_{\nu/2}} |w^\nu(x)|dxds\\
&\quad+\frac{C}{\nu}\int_0^t  \int_{\Omega_\nu\setminus \Omega_{\nu/2}} |w^\nu(x)|^3dxds.
\end{align}
By the equicontinuity at the boundary \eqref{inviscidLimAssum}, for any $\delta\in (0, 1)$, there exists $\rho:=\rho(\delta)$ such that for all $h<\rho$,
\be
\begin{aligned}
|E_2^{\nu,h}(t)| &\leq  \max\{\|p\|_{L^\infty(0, T; \Omega_\nu)}, \|\partial_t\psi\|_{L^\infty(0, T; \Omega_\nu)}\}\frac{C\delta }{\nu}\int_0^t \int_{\Omega_\nu}  \gamma(d(x),s) dx ds\\
&\qquad +\frac{C\delta^3 }{\nu}\int_0^t \int_{\Omega_\nu}  |\gamma(d(x),s)|^3 dxds\\
&\le  C\Big(1+T+\max\{\|p\|_{L^\infty(0, T; \Omega_\nu)}, \|\partial_t\psi\|_{L^\infty(0, T; \Omega_\nu)}\}\Big)\Big(\delta+\frac{\delta}{\nu}\int_0^t \int_{\Omega_\nu}  |\gamma(d(x),s)|^3 dxds \Big)
\end{aligned}
\ee
where we used the elementary inequality $\gamma\le \gamma^3+1$ together with the facts that $|\Omega_\nu|\le C\nu$ and  $d(x)=|x-\pi(x)|$ for all $x\in \Omega_{h(\Omega)}$. \\

Next, we note that for any  integrable function $f\in L^1_{loc}(\mathbb{R}^+)$, the following formula holds
\be\label{tubularVar}
\int_{\Omega_{h}}   f(d(x)) dx   = \int_{0}^{h}   f(y) |N(y)| dy\leq  C_\Omega \int_{0}^{h}   f(y) dy
\ee
where $N(y):=\{x\in \Omega: d(x) = y\}$ and we have used that $|N(y)|$ is bounded by a constant depending on the domain by independent of $h, \ell$.    Eq. \eqref{tubularVar} is a special case of the general formula for integration over a tubular neighborhood of level hypersurfaces, see e.g. \cite{DK04}.    Applying  formula \eqref{tubularVar} with $f=\gamma^3$ and then using property \eqref{gamBnd} of $\gamma$  we deduce that
\begin{align}\label{E2bnd}
\sup_{t\in [0,T]} |E_2^{\nu,h}(t)| &\leq C\Big(\delta+\delta\Vert\Gamma\Vert_{L^1((0, T))}+o_\nu(1)\Big),\quad h< \rho.
\end{align}
Letting  $h=\nu\to 0$ and then $\delta\to 0$ we conclude that
\be
\lim_{\nu\to 0} \sup_{t\in [0,T]} |E_2^{\nu,h}(t)| =0.
\ee
\noindent \textbf{Error term $E_3^{\nu,h}(t)$.} Note that since $\hat{n}\cdot u^\nu|_{\partial\Omega}=\hat{n}\cdot v|_{\partial\Omega}=0$, this error term can be again written in terms of boundary increments.  In particular
\begin{align} \nonumber
|E_3^{\nu,h}(t)| &\leq   \frac{1}{h}\left|  \int_0^t \int_{\Omega_h\setminus \Omega_{h/2}}   \eta'\left(\frac{d(x)}{h}\right) \hat{n}(\pi(x)) \cdot  v(x) \frac{1}{2} |v(x)|^2 dxds\right|\\
&\qquad +  \frac{1}{h}\left|  \int_0^t \int_{\Omega_h\setminus \Omega_{h/2}}   \eta'\left(\frac{d(x)}{h}\right) \hat{n}(\pi(x)) \cdot w^\nu(x) \frac{1}{2} |v(x)|^2 dxds\right|.
\end{align}
To estimate the first term, note that since $v\in C^1([0,T]\times \Omega)$ we have that $|\hat{n}(\pi(x))\cdot v(x)|\leq C d(x)\leq C h$.  For the other term, recall that we set $h=\nu$.  Therefore, since $u^\nu$ is equicontinuous at the boundary in a layer of size $\nu$,  for any $\delta>0$, there exists $\rho:=\rho(\delta)$ such that for all $h<\rho$ we have
\begin{align}\nonumber
|E_3^{\nu,h}(t)| &\leq  C \int_0^t  \| v(s)\|_{L^2(\Omega_\nu\setminus \Omega_{\nu/2})}^2 ds+ C \delta\int_0^t \| v(s)\|_{L^\infty(\Omega_\nu)}^2 \frac{1}{\nu}\int_{\Omega_\nu} \gamma(d(x),s)   dxds.
\end{align}
The first contribution above is easily seen to vanish by Lebesgue dominated convergence theorem since $v\in L^\infty(0,T;L^2(\Omega))$.  The second contribution vanishes since the integral is treated exactly as in \eqref{E2bnd} and, for $\nu\to 0$, $\delta$ is permitted arbitrarily small.  Thus,
\be
\lim_{\nu\to 0} \sup_{t\in [0,T]} |E_3^{\nu,h}(t)| =0.
\ee
\noindent \textbf{Error term $E_4^{\nu,h}(t)$.} Note that, again, we can use the no slip boundary conditions on the Navier-Stokes solution to introduce a velocity increment at the boundary
\be
\nu  \int_0^t\int_\Omega  \Delta v^h \cdot  u^\nu dx ds= \nu  \int_0^t\int_\Omega  \Delta v^h \cdot  (u^\nu(x)- u^\nu(\pi(x))) dx ds.
\ee
The Laplacian applied to the localized Euler solution involves many terms
\be
\Delta v^h=   \Delta\Big(\theta_h(x) v(x,t) +\eta'\left(\frac{d(x)}{h}\right)   \frac{ \hat{\tau}(\pi(x))}{h} \psi(x,t)\Big).
\ee
Note that under our assumption that $\partial \Omega\in C^3$, we have $\sigma\in C^2(\Omega_{h(\Omega)})$ and $\hat{n},\hat{\tau}\in C^2(\partial \Omega)$.   By inspection, it is clear that the worst term (in the sense of most powers of $1/h$) is
\be
\nu  \int_0^t\int_\Omega  \Delta v^h \cdot  u^\nu dx ds\sim   \frac{\nu}{h^3}\int_0^t\int_{\Omega_h\setminus \Omega_{h/2}}  \eta'''\left(\frac{d(x)}{h}\right)   \hat{\tau}(\pi(x)) \psi(x,t) \cdot (u^\nu(x)- u^\nu(\pi(x))) dx ds.
\ee
We estimate this using \eqref{psieps}. Then, for any $\delta>0$, there exists $\rho:=\rho(\delta)$ s.t. for all $h<\rho$ we have
\be
\nu \left| \int_0^t\int_\Omega  \Delta v^h \cdot  u^\nu dx ds\right|\sim   \frac{C}{\nu} \int_0^t\int_{\Omega_\nu} |u^\nu(x)- u^\nu(\pi(x))| dx\leq C \delta \int_0^t\frac{1}{\nu}\int_{\Omega_\nu} \gamma(d(x),s)   dxds.
\ee
This vanishes as  $\nu\to 0$ by the same argument as in \eqref{E2bnd}, since $\delta$ is permitted arbitrarily small.  We conclude 
\be
\lim_{\nu\to 0}   \sup_{t\in [0,T]}|E_4^{\nu,h}(t)| =0.
\ee
\noindent \textbf{$L^2$ convergence.} 
Altogether, we have shown that $\sup_{t\in [0,T]} |E^{\nu,h}(t)| \to 0$ as $h=\nu\to 0$.   Consequently, combined with the bound  \eqref{GronBnd},  we conclude  that
\be
\lim_{\nu\to 0}\sup_{t\in [0,T]} \|u^\nu(t) - v(t)\|_{L^2(\Omega)}^2= 0.
\ee
%\begin{rem}
%The argument presented above is really Kato's original argument \cite{Kato84} in disguise.  In fact, the conclusion of Theorem \ref{thm:inviscid} can be obtained from Kato's proof by estimating the term
%\be
%adsf
%\ee 
%instead of integrating by parts only once and bounding with $\nu \|\nabla u\|_{L^2(\Omega_\nu)}^2$.  We thank T. T. Nguyen for pointing this out.
%\end{rem}

 %%%%%%%%%%% %%%%%%%%%%% %%%%%%%%%%% %%%%%%%%%%% %%%%%%%%%%% %%%%%%%%%%

\subsection*{Acknowledgments}  We are grateful to C. Bardos, E. Titi and E. Wiedemann for communicating an early version of their forthcoming preprint \cite{BTW18} and interesting correspondence which resulted in improvements to the present manuscript. We thank P. Constantin, \red{ V. Vicol and  the anonymous referee for valuable comments.
} 
Research of TD is partially supported by NSF-DMS grant 1703997.
 %%%%%%%%%%% %%%%%%%%%%% %%%%%%%%%%% %%%%%%%%%%% %%%%%%%%%%% %%%%%%%%%%

\vspace{-1.4mm}


\begin{thebibliography}{99}
\bibitem{O49} Onsager, L.: Statistical hydrodynamics. Il Nuovo Cimento (Supplemento), {\bf{6}}: 279-287 (1949)
\bibitem{GLE94} 
Eyink, G.~L.: Energy dissipation without viscosity in ideal hydrodynamics I. Fourier analysis and local energy transfer, Physica D {\bf 78}: 222--240,  (1994)
\bibitem{CET94}Constantin, P. , W. E, and Titi, E.: Onsager's conjecture on the energy conservation for solutions of Euler's equation. Comm. Math. Phys. {\bf{165}}: 207-209,  (1994)
\bibitem{DR00}
Duchon, J. and Robert, R.: Inertial energy dissipation for weak solutions of incompressible Euler and Navier-Stokes equations. Nonlinearity {\bf{13.1}}: 249--255, (2000)
\bibitem{CCFS08}
Cheskidov, A., Constantin, P., Friedlander, S. and Shvydkoy, R.: 
Energy conservation and Onsager's conjecture for the Euler equations, 
Nonlinearity {\bf 21}: 1233--1252,  (2008)

\bibitem{CFLS}
Cheskidov, A.,  Lopes Filho, M. C.,  Nussenzveig Lopes, H. J., Shvydkoy, R.: Energy conservation in two-dimensional incompressible ideal fluids. Comm. Math. Phys. 348 (2016), no. 1, 129--143.

\bibitem{Shvydkoy}
Shvydkoy, R: Lectures on the Onsager conjecture, DCDS-S Volume: 3(3), 473-496, (2010)

\bibitem{DE17}
Drivas. T.D. and Eyink G.L.: An Onsager Singularity Theorem for   Leray--Hopf solutions of Incompressible Navier-Stokes. preprint arXiv:1710.05205, (2017)
\bibitem{DE17comp}
Drivas, T.D., and Eyink, G.L.: An Onsager singularity theorem for turbulent solutions of compressible Euler equations. Comm. Math. Phys. (2017): 1-31.
\bibitem{LS09} 
De Lellis, C. and Sz\'ekelyhidi Jr., L.:  The Euler equations as a differential inclusion. {\bf 170}: 1417-1436 (2009) 
\bibitem{LS10}
De Lellis, C. and Sz\'ekelyhidi Jr., L.:  On admissibility criteria for weak solutions of the Euler equations. Arch. Ration. Mech. and Anal. {\bf 195}: 225--260, (2010) 
\bibitem{LS12}
De Lellis, C. and Sz\'ekelyhidi Jr., L.:  The $h$-principle and the equations of fluid dynamics,
B. Am. Math. Soc. {\bf 49}:  347--375,  (2012)
\bibitem{I16} 
Isett, P.: A proof of Onsager's conjecture, preprint arXiv:1608.08301,  (2016)
\bibitem{BLSV17}
Buckmaster, T., De Lellis, C., Sz\'ekelyhidi Jr., L. and Vicol, V.: 
Onsager's conjecture for admissible weak solutions, 
preprint arXiv:1701.08678,  (2017)
\bibitem{E18}
Eyink, G.L.: Review of the Onsager ``Ideal Turbulence" Theory. arXiv preprint arXiv:1803.02223 (2018).
\bibitem{BO95} 
Borue, V. and Orszag, S.~A.:
Self-similar decay of three-dimensional homogeneous turbulence with hyperviscosity
Phys. Rev. E {\bf 51}: R856--R859, (1995) 
\bibitem{TBS02}
Touil, H., Bertoglio,  J.-P. and Shao, L.:
The decay of turbulence in a bounded domain, J. Turb. {\bf 3}: N49, (2002)
\bibitem{KRS98}
Sreenivasan, K.~R.: An update on the energy dissipation rate in isotropic turbulence, 
Phys. Fluids {\bf 10}: 528--529,  (1998)
\bibitem{KIYIU03}
Kaneda, Y., Ishihara, T., Yokokawa, M., Itakura, K.,  and Uno, A.: 
Energy dissipation rate and energy spectrum in high resolution direct numerical simulations 
of turbulence in a periodic box, Phys. Fluids {\bf 15}: L21--L24,  (2003)
\bibitem{KRS84} 
Sreenivasan, K.~R.: On the scaling of the turbulence energy dissipation rate, 
Phys. Fluids {\bf 27}: 1048--1051,  (1984)
\bibitem{PKW02}
 Pearson, B.~R., Krogstad, P.~A., and van de Water, W.:
Measurements of the turbulent energy dissipation rate, Phys. Fluids {\bf 14}: 1288--1290,  (2002)
\bibitem{GLE95} 
Eyink, G.L.: Besov spaces and the multifractal hypothesis, 
J. Stat. Physics {\bf 78}: 353--375, (1995) 
\bibitem{T84} 
Temam, R.: Navier-Stokes Equations: Theory and Numerical Analysis (North-Holland,
Amsterdam, 1984)
\bibitem{L34}
Leray, J.: Sur le mouvement d'un liquide visqueux emplissant l'espace, Acta Math. {\bf 63}: 193--248, (1934)
\bibitem{H50}
Hopf, E.: \"{U}ber die Anfangswertaufgabe f\"{u}r die hydrodynamischen Grundgleichungen. Erhard Schmidt zu seinem 75. Geburtstag gewidmet. Mathematische Nachrichten 4.1-6: 213-231, (1950)
\bibitem{Jmm14}
Mauro, J.~A., On the regularity properties of the pressure field associated to a Hopf weak solution to the Navier-Stokes equations, Pliska Stud. Math. Bulgar. 23, 95--118 (2014)
\bibitem{Giga91}
Giga, Y.  and Sohr, H. Abstract $L^p$ estimates for the Cauchy problem with applications to the Navier-Stokes equations in exterior domains. Journal of Functional Analysis 102, 1, 72--94, (1991)
\bibitem{Sohr86}
 Sohr, H. and von Wahl, W. On the regularity of the pressure of weak
solutions of Navier-Stokes equations. Arch. Math. 46, 5, 428--439,  (1986)
\bibitem{BT18}
Bardos, C., and Titi, E.:  Onsager's Conjecture for the Incompressible Euler Equations in Bounded Domains. Archive for Rational Mechanics and Analysis 228.1 (2018): 197-207.
\bibitem{BTW18}
Bardos, C., Titi, E., Wiedemann, E.: Onsager's Conjecture with Physical Boundaries and an
Application to the Viscosity Limit, preprint arXiv:1803.04939, (2018)
\bibitem{LS99}
Lewis, G., and Swinney, H. Velocity structure functions, scaling, and transitions in high-Reynolds-number Couette-Taylor flow. Phys. Rev. E 59.5: 5457, (1999).
\bibitem{EscMon}  Escauriaza, L. and Montaner, S.: Some remarks on the $L^p$ regularity of second derivatives of solutions to non-divergence elliptic equations and the Dini condition, Rend. Lincei. Mat. Appl. (28): 49-63, (2017).
\bibitem{C97}
Cadot, O., Couder, Y., Daerr, A., Douady, S., and Tsinober, A.: Energy injection in closed turbulent flows: Stirring through boundary layers versus inertial stirring. Phys. Rev. E, 56(1), 427, (1997).
\bibitem{Fureby97}
Fureby, C., and G. Tabor.: Mathematical and physical constraints on large-eddy simulations. Theoretical and Computational Fluid Dynamics 9.2 (1997): 85-102.
\bibitem{Ghosal99}
Ghosal, S..: Mathematical and physical constraints on large-eddy simulation of turbulence. J. AIAA  37.4 (1999): 425-433.
\bibitem{Bos05}
van der Bos, R., and Geurts, B.: Commutator errors in the filtering approach to large-eddy simulation. Phys. Fluids 17.3 (2005): 035108.
\bibitem{Geurts06}
Geurts, B., and Holm, D.: Commutator errors in large-eddy simulation. J. Phys. A, 39.9: 2213, (2006)
\bibitem{Chen12}
Chen, S., Xia, Z., Pei, S., Wang, J., Yang, Y., Xiao, Z., and Shi, Y.: Reynolds-stress-constrained large-eddy simulation of wall-bounded turbulent flows. J. Fluid Mech., 703, 1-28, (2012)
\bibitem{Kato84}
Kato, T.: Remarks on zero viscosity limit for nonstationary Navier-Stokes flows with boundary. Seminar on nonlinear partial differential equations. Springer, New York, NY, 1984.
\bibitem{P32}
Prandtl, L.. Zur turbulenten Str\"{o}mung in R\"{o}hren und l\"{a}ngs.  Platten, Ergebn.
Aerodyn. Versuchsanst, G\"{o}ttingen 4 18-29 (1932)
\bibitem{vK30}
von K\'{a}rm\'{a}n, T.: Mechanische \"{A}hnlichkeit und Turbulenz. Nach. Ges. Wiss. 
G\"{o}ttingen, Math.-Phys. Kl. 58-76 (1930)
\bibitem{Enotes} Eyink,  G.L. Turbulence Theory. course notes, \url{http://www.ams.jhu.edu/~eyink/Turbulence}, (2015)
\bibitem{S07}
Sreenivasan, K. R.: Scaling and structure in high Reynolds number wall-bounded flows. Ed. Bev. J. Mck. Royal Soc., (2007).
\bibitem{P07}
Panton, R. L.: Composite asymptotic expansions and scaling wall turbulence. Philosophical Transactions of the Royal Society of London A: Mathematical, Physical and Engineering Sciences 365.1852 (2007): 733-754.
\bibitem{Farge11}
Nguyen van yen, R., Farge, M., and Schneider., K.: Energy dissipating structures produced by walls in two-dimensional flows at vanishing viscosity. Phys. Rev. Lett. 106.18 (2011): 184502.
\bibitem{CV17}
Constantin, P., and Vicol, V.: Remarks on high Reynolds numbers hydrodynamics and the inviscid limit.  preprint arXiv:1708.03225 (2017).
\bibitem{S96}
Sreenivasan, K. R., Vainshtein, S. I., Bhiladvala, R., San Gil, I., Chen, S., and Cao, N.: Asymmetry of velocity increments in fully developed turbulence and the scaling of low-order moments. Phys. Rev. Letts. 77(8), 1488,  (1996).
\bibitem{A84}
Anselmet, F., Gagne, Y., Hopfinger, E. J., and Antonia, R. A.: High-order velocity structure functions in turbulent shear flows. Journal of Fluid Mechanics, 140, 63-89, (1984)
\bibitem{CEIV17}
Constantin, P., Elgindi, T., Ignatova, M., and Vicol, V.: Remarks on the Inviscid Limit for the Navier--Stokes Equations for Uniformly Bounded Velocity Fields. SIAM Journal on Mathematical Analysis, 49(3), 1932-1946 (2017)
\bibitem{W17}
Wiedemann, E..: Weak-strong uniqueness in fluid dynamics. arXiv preprint arXiv:1705.04220 (2017).
\bibitem{B14}
Bardos, C., Székelyhidi Jr, L. and  Wiedemann, E.: Non-uniqueness for the Euler equations: the effect of the boundary.  Russian Mathematical Surveys 69.2 (2014): 189.
\bibitem{Evans10}
 Evans, L. C., Partial Differential Equations, Graduate Studies in Mathematics, American Mathematical Society, (2010).
\bibitem{DK04}
Duistermaat, J., and Kolk, J.: Multidimensional real analysis I: differentiation. Vol. 86. Cambridge University Press, 2004.
\bibitem{Isett17}
Isett, P.: Nonuniqueness and existence of continuous, globally dissipative Euler flows. preprint arXiv:1710.11186 (2017).
%\bibitem{Brezis}  Brezis H.:  Functional analysis, Sobolev spaces and partial differential equations. Universitext. Springer, New York, 2011. xiv+599 pp.
\end{thebibliography}
\end{document}